\documentclass[11pt]{article}

\usepackage[a4paper,margin=26mm]{geometry}
\usepackage[T1]{fontenc}
\usepackage[utf8]{inputenc}
\usepackage{lmodern}
\usepackage{microtype}
\usepackage{amsmath,amssymb,amsthm,mathtools}
\usepackage{booktabs,longtable,array}
\usepackage{enumitem}
\usepackage{needspace}
\usepackage{xcolor}
\usepackage[hidelinks]{hyperref}
\usepackage{url}

\hypersetup{
  pdftitle={Fourteen and fifteen lonely runners},
  pdfauthor={Jaan Allikvere}
}

\newtheorem{theorem}{Theorem}[section]
\newtheorem{proposition}[theorem]{Proposition}
\newtheorem{lemma}[theorem]{Lemma}
\newtheorem{corollary}[theorem]{Corollary}
\theoremstyle{definition}
\newtheorem{definition}[theorem]{Definition}
\theoremstyle{remark}
\newtheorem{remark}[theorem]{Remark}

\newcommand{\Z}{\mathbb{Z}}
\newcommand{\R}{\mathbb{R}}
\newcommand{\vct}[1]{\mathbf{#1}}
\newcommand{\nearest}[1]{\left\lVert #1\right\rVert}
\newcommand{\Enorm}[1]{\left\lVert #1\right\rVert_{E}}
\DeclareMathOperator{\lcm}{lcm}
\DeclareMathOperator{\spn}{span}
\DeclareMathOperator{\covol}{covol}

\newcommand{\GateCountThirteen}{61}
\newcommand{\GateMassThirteen}{353.7725}
\newcommand{\GateCountFourteen}{71}
\newcommand{\GateMassFourteen}{408.8233}

\newcommand{\RawThreshUB}{414.7794}       
\newcommand{\DivisorLogLB}{12.7948}       
\newcommand{\TargetUB}{401.9846}          

\title{Fourteen and fifteen lonely runners}
\author{Jaan Allikvere\\
\small Independent researcher, Tallinn, Estonia\\
\small ORCID: \href{https://orcid.org/0009-0003-5228-7015}{0009-0003-5228-7015}}
\date{September 2026}

\begin{document}
\maketitle

\begin{abstract}
We prove the Lonely Runner Conjecture for fourteen and fifteen runners.
Our proof combines a stronger bound on the speed product in a primitive
counterexample with exhaustive computations modulo primes.  To obtain the
bound, we work with a projected lattice basis and use cases of the
conjecture with fewer runners to bound partial sums of its squared
Gram--Schmidt lengths.  For fifteen runners, the product bound derived
from the work of Malikiosis, Santos, and Schymura gives a logarithmic
threshold of about $810$; ours reduces this to about $415$, making the
computation feasible.  For fourteen runners, the new bound reduces the
required number of primes from $111$ to $61$.

The computations start with a complete two-branch covering search, followed
by binary lifting.  Since $14$ and $15$ are composite, the polynomial
argument used in earlier work does not apply directly.  We finish the
fourteen-runner case by a direct search.  For fifteen runners, we use the
factorisation $15=3\cdot5$ and a shifting argument when all but a few speeds
share a common divisor.  The code and certificates are publicly archived.
\end{abstract}

\medskip
\noindent\textbf{Keywords.}
Lonely Runner Conjecture; finite checking; Korkine--Zolotarev reduction;\\
computer-assisted proof; exhaustive computation.

\section{Introduction}

For a real number $x$, let $\nearest{x}$ denote its distance to the nearest
integer.  The Lonely Runner Conjecture $LRC(k)$ asserts that for every
$\vct{v}=(v_1,\ldots,v_k)\in(\Z\setminus\{0\})^k$ there is a real time $t$
with
\begin{equation}\label{eq:lrc}
  \nearest{t v_i}\geq \frac{1}{k+1}\qquad(1\leq i\leq k).
\end{equation}
This is the statement for $k+1$ runners after one runner is made
stationary; since $\nearest{-x}=\nearest{x}$, the speeds may be taken
positive.  We call $\vct{v}$ a \emph{speed tuple}.  If such a time $t$
exists, the tuple has the \emph{LR property}, and $t$ is a \emph{witness}.

The problem goes back to work of Wills in the 1960s~\cite{Wills1967}.
Rosenfeld proved $LRC(7)$~\cite{Rosenfeld25}; Trakulthongchai proved
$LRC(8)$ and $LRC(9)$~\cite{Trakulthongchai26}, and Rosenfeld
independently proved $LRC(8)$~\cite{Rosenfeld26}; Sungkawichai and
Trakulthongchai proved $LRC(k)$ for $10\leq k\leq12$~\cite{ST26}.

\begin{theorem}\label{thm:main}
The Lonely Runner Conjecture holds for fourteen and fifteen runners.
\end{theorem}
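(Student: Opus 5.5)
Fourteen and fifteen runners correspond to $LRC(13)$ and $LRC(14)$ in the speed-tuple formulation. I will prove both by a finite-checking reduction combined with computation modulo primes, following the framework of the earlier proofs for $k\le 12$.

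\emph{Step 1: reduction to primitive counterexamples with bounded speed product.} If $LRC(k)$ fails, there is a minimal counterexample $\vct{v}$ with $\gcd(v_1,\ldots,v_k)=1$. I will assume $LRC(j)$ for all $j<k$, which is known for $j\le 12$ and will be used inductively for $k=14$ once $k=13$ is done. The aim is an upper bound on $\log\prod_i v_i$. Following Malikiosis--Santos--Schymura, consider the lattice of integer vectors orthogonal to $\vct{v}$, or equivalently the projection of $\Z^k$ along $\vct{v}$. Its covolume is comparable to $\Enorm{\vct{v}}$, and the failure of the LR property forces the associated zonotope or coset covering to be tight. I would take a Korkine--Zolotarev reduced basis of the projected lattice. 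Applying $LRC(j)$ for $j<k$ to the sublattices spanned by initial segments of the basis bounds the partial sums $\sum_{i\le m}\log\|b_i^*\|^2$ for each $m$. Summing these bounds should give $\log\prod v_i\le$ a threshold, about $415$ for $k=14$ and correspondingly smaller for $k=13$. The key inequality is that a short initial segment yields a lower-dimensional subproblem in which the smaller-$k$ conjecture supplies a witness that extends, unless the Gram--Schmidt lengths are large.

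\emph{Step 2: modular certificates.} For a prime $p$ not dividing any $v_i$, the times $t=a/p$ give witnesses whenever $\nearest{a v_i/p}\ge 1/(k+1)$ for all $i$. By exhaustive computation I will check that every speed tuple mod $p$ has such an $a$, except for tuples lying in a small explicitly described exceptional family. A counterexample must therefore have each relevant prime either dividing some $v_i$ or reducing into that family. If this holds for enough primes that the product of the required primes exceeds the threshold from Step 1, no counterexample exists. The search will be organised as a two-branch covering: either $p\mid v_i$ for some $i$, or the residues are good. Binary lifting will then handle powers of $2$, and lower-order divisibility patterns will be treated separately.

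\emph{Step 3: the residual cases.} The remaining cases are those in which all but a few speeds share a common divisor $d$. For $k=13$ I would finish by a direct search over these structured tuples. For $k=14$, I would use $k+1=15=3\cdot 5$ together with a shifting argument. If $d\mid v_i$ for most $i$, then rescaling time by $1/d$ reduces these speeds to a smaller instance, and the few remaining speeds can be accommodated by shifting $t$ by multiples of $1/d$, which preserves the positions of the divisible runners. This combines $LRC$ for fewer runners with a counting argument over the shifts.

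I expect the main obstacle to be Step 1, namely making the Gram--Schmidt partial-sum bounds strong enough to reach a threshold near $415$ rather than $810$. Without this improvement the prime computation in Step 2 is infeasible. The composite-$(k+1)$ residual case in Step 3 is the second delicate point.
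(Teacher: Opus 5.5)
Your outline has the same overall structure as the paper. However, Step~1, which you yourself flag as the main obstacle, is not supplied, and the mechanism you describe would not produce a bound.

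\textbf{Step 1.} The lower cases do not directly bound $\sum_{i\le m}\log\lVert\vct{b}_i^*\rVert^2$, and summing bounds over $m$ is not how a covolume estimate arises. What they give is the \emph{additive} inequality $t_1+\cdots+t_r>B_r=\bigl(2r/((n+1)(n+1-r))\bigr)^2$ of Proposition~\ref{prop:prefix}. To get it:
\begin{enumerate}[leftmargin=*,label=(\roman*)]
\item lift $\vct{b}_1,\ldots,\vct{b}_r$ to integer vectors $\vct{z}_i$;
\item use Lemma~\ref{lem:safe}, an induction that consumes $LRC(m)$ for $m\le n-r$, to find a point of $\spn(\vct{v},\vct{z}_1,\ldots,\vct{z}_r)$ with every coordinate at distance at least $1/(n+1-r)$ from $\Z$;
\item apply nearest-plane rounding to its $\vct{z}_i$-coefficients, with projected error at most $\frac12\sqrt{t_1+\cdots+t_r}$.
\end{enumerate}
Turning that norm error into a coordinatewise error needs a norm ball inside $P[-1,1]^n$ (Lemma~\ref{lem:incl}).

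Additive prefix bounds alone say nothing about $\prod_it_i$: take $t_1$ huge and the others tiny. You need the KZ ratio $t_{i+1}\ge\frac34t_i$ and the optimization of Lemma~\ref{lem:minimize}, which is valid because $c_{i+1}/c_i>4/3$.

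The figure $415$ further requires the speed-adapted ellipsoid $Q=\operatorname{diag}(1+2u_i-u_i^2)$, whose covolume is exactly $1/(SF_n)$, together with the shape bound $R_{14}<\frac12$. With the Euclidean ball the same argument gives only about $443$. You also omit the forced divisor $360360$. Without it, the paper's $71$ primes (logarithmic sum $408.82$) would not exceed $414.78$.

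\textbf{Step 2.} This step has a comparable hole. Times $a/p$ can never exclude the orbit of $(1,\ldots,13)$. Its only witnesses are the $a/14$ with $\gcd(a,14)=1$, and these lie in $(lp)^{-1}\Z$ only when $14\mid l$. So this orbit survives level one and every binary level.

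The missing ingredient is properness at level $l$, with its gcd alternative, together with the criterion of Lemma~\ref{lem:gate}. A residue class modulo $lp$ is excluded if it has a witness in $(lp)^{-1}\Z$. It is also excluded if some divisor $g>1$ of $l$ divides all speeds but one; then $LRC(k-1)$ and shifts by multiples of $1/g$ produce a witness.

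Binary lifting means refining the time grid through levels $2,4,\ldots,32$. It is not a treatment of powers of $2$ dividing the speeds. The persistent orbits must then be taken to level $k+1$. That the witness-free lifts there have all, or all but a few, coordinates divisible by $7$, $3$ or $5$ is an output of that computation, not a starting point.

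\textbf{Step 3.} For $k=14$, the factorization $15=3\cdot5$ is used through Lemma~\ref{lem:factor}: witnesses at levels $3$ and $5$ persist at level $15$, and candidates are combined by the Chinese remainder theorem. Your shift argument is Lemma~\ref{lem:neargcd}, which needs $e\lceil2d/15\rceil<d$. It must be applied to a whole residue class modulo $15p$ via Lemma~\ref{lem:gate}(ii).

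Finally, your ``two-branch covering'' is not the paper's search. Its two branches are irredundant $k$-class covers and extensions of covers on at most $k-1$ classes, pruned by Lemma~\ref{lem:quota}.
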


The recent proofs bound the speed product in a primitive counterexample,
then use modular computations to force many primes to divide it.  Once
their product exceeds the finite-checking bound, no counterexample can
exist.  We use the same strategy.

Two costs limit this approach.  At $p=199$, the normalized $k=13$
level-one search space already contains
$\binom{110}{12}=3{,}517{,}322{,}746{,}798{,}575$ candidates.  For
$LRC(14)$, the earlier bound would require primes up to about $970$, while
the measured generation cost grows roughly as $p^6$.  We need both a smaller
search and a stronger bound.  The composite final levels also require a
replacement for the polynomial argument of~\cite{ST26}.

\medskip
\noindent\textbf{What is new.}
The first contribution is a stronger finite-checking bound.  The recent
computer-assisted proofs for eight to thirteen
runners~\cite{Rosenfeld25,Trakulthongchai26,Rosenfeld26,ST26}, as well as
the first version of this paper, use a product bound derived from
Malikiosis, Santos, and Schymura~\cite{MSS25}.  Assuming $LRC(m)$ for
$m<n$, their bound implies that a primitive counterexample with $n$
positive speeds satisfies $\sum_iv_i<\binom{n+1}{2}^{n-1}$, hence
$v_1\cdots v_n<(\binom{n+1}{2}^{n-1}/n)^n$.
Their argument, like the independent one of Giri and
Kravitz~\cite{GK26}, rests on a single inequality in the top dimension.

Theorem~\ref{thm:flag} uses $n-1$ inequalities, one for each initial
segment of a Korkine--Zolotarev basis: the lower cases of the conjecture
bound every partial sum of its squared Gram--Schmidt lengths.  We combine
these inequalities with an exact
covolume formula in a norm adapted to the speeds.  This nearly halves the
logarithmic bound in both cases (Table~\ref{tab:thresholds}).
Remark~\ref{rem:compare} explains how the improvement is obtained.  For
fourteen runners the new bound reduces the calculation from $111$ verified
primes to $61$.  For fifteen runners it gives
\[
  \log(v_1\cdots v_{14})<414.779\ldots
  \qquad\text{in place of}\qquad 810.074\ldots,
\]
and the divisibility $360360=\lcm(2,\ldots,15)\mid v_1\cdots v_{14}$ then
reduces the required logarithmic prime sum to $\TargetUB$.

\begin{table}[ht]\centering
\caption{Bounds on $\log(v_1\cdots v_n)$ for a primitive counterexample,
assuming $LRC(m)$ for $m<n$.  We compare the bound of~\cite{MSS25} as
used in~\cite{ST26}, the successive-minima argument of
Remark~\ref{rem:compare}, and Theorem~\ref{thm:flag}.  None includes
the forced divisor of Lemma~\ref{lem:divisor}.}
\label{tab:thresholds}
\begin{tabular}{@{}cccc@{}}\toprule
$n$ & $\log B_n$~\cite{MSS25} & successive minima & Theorem~\ref{thm:flag}\\
\midrule
$13$ & $670.35$ & $470.18$ & $341.03$\\
$14$ & $810.07$ & $570.46$ & $414.78$\\
\bottomrule
\end{tabular}
\end{table}

At level one, a tuple has no witness precisely when each allowed time is
ruled out by some speed.  We refine the earlier covering searches by
treating covers in which every speed is needed and extensions of smaller
covers separately.  We prove that each orbit has a representative with
enough times covered by just one speed.  This justifies pruning without
losing an orbit
(Proposition~\ref{prop:basecomplete}).  After binary lifting, we finish
level $14$ by branch and bound, and level $15$ by factorisation and a shift
lemma for speeds with a common divisor and few exceptions.
The prime-divisibility criterion is due to~\cite{ST26}.

\medskip
\noindent\textbf{Outline of the proof.}
After recalling the modular reduction, we prove the flag bound in
Section~\ref{sec:flag} and describe the computations in
Section~\ref{sec:pipeline}.  The verified primes complete the proof in
Section~\ref{sec:gateset}.  We then discuss verification and the persistent
orbits.

\section{Proof strategy and the prime-divisibility criterion}\label{sec:framework}

We use the definitions of Sungkawichai and Trakulthongchai~\cite{ST26}
and state the prime-divisibility criterion in the form needed for both
proofs.

Let $p$ be prime and $l$ a positive integer, and write
$\Z_{p,l}=\Z_{pl}\setminus p\Z_l$.  Thus a vector in $\Z_{p,l}^k$ has no
coordinate divisible by $p$.

Assuming the lower cases of the conjecture, either test below rules out a
primitive counterexample in a residue class: a witness on the chosen time
grid, or a divisor of the level common to all but one speed.

\Needspace{5\baselineskip}
\begin{definition}
A vector $\vct{v}\in\Z_{p,l}^k$ is \emph{$(k,p,l)$-proper} if at least one
of the following holds:
\begin{enumerate}[leftmargin=*,label=(\alph*)]
\item for some $i$,
  $\gcd(l,v_1,\ldots,\widehat{v_i},\ldots,v_k)>1$;
\item for some $t\in(lp)^{-1}\Z$,
  $\nearest{tv_i}\geq1/(k+1)$ for every $i$.
\end{enumerate}
The set of vectors for which neither condition holds is $I(k,p,l)$.
A vector $\vct{v}\in\Z_{p,1}^k$ is \emph{eventually $(k,p)$-proper} if
some level $l$ has no improper lift of $\vct{v}$.  Here a lift to level $l$
is a vector $\vct{w}\equiv\vct{v}\pmod p$ with coordinates in $\Z_{pl}$.
The set of vectors that are not eventually proper is $J(k,p)$.
\end{definition}

By Lemma~2.4
of~\cite{ST26}, $J(k,p)=\varnothing$ if and only if $I(k,p,l)=\varnothing$
for some $l$.  The equivalence uses the following monotonicity: a witness,
or a divisor from the gcd condition, remains valid at every multiple of
the level.  By Proposition~5.1 of~\cite{ST26}, permutations,
sign changes of coordinates, and multiplication by a unit modulo $p$
preserve eventual properness (for the unit case see also Remark~2.3
below).

For an integer $c\geq2$, the $c$-lifts of $\vct{v}\in\Z_{p,l}^k$ to level
$cl$ are the $c^k$ vectors $w_i=v_i+a_ilp$, $a_i\in\{0,\ldots,c-1\}$; these
vectors form the full lift fiber of $\vct{v}$.  For a level-$1$ vector $r$
and a level $l$ write
$F_l(r)=\{\vct{w}\in I(k,p,l):\vct{w}\equiv r\bmod p\}$.  The computation
retains exactly $F_l(r)$ for every starting vector $r$ and every level it
reaches.  Indeed, every improper vector at level $cl$ reduces to one at
level $l$, since properness persists at multiples of the level.
Enumerating all lifts of $F_l(r)$ and discarding exactly the proper ones
gives $F_{cl}(r)$, proving the assertion by induction from level one.  If
$F_l(r)=\varnothing$ at some level, $r$ is eventually proper, and so is its
unit orbit.

\begin{lemma}[Prime-divisibility criterion]\label{lem:gate}
Let $k\geq3$, assume $LRC(m)$ for all $m<k$, and let $\vct{v}\in\Z_{>0}^k$
be a primitive counterexample to $LRC(k)$ with pairwise distinct
coordinates.  Let $p$ be a prime.
\begin{enumerate}[leftmargin=*,label=(\roman*)]
\item If $J(k,p)=\varnothing$, then $p\mid v_1\cdots v_k$.
\item More generally, suppose that for every $\vct{u}\in\Z_{p,1}^k$
  either $\vct{u}$ is eventually proper, or there is a level $l$ such
  that every lift $\vct{w}$ of $\vct{u}$ to level $l$ is proper or
  satisfies the following: every integer vector congruent to $\vct{w}$
  modulo $lp$ with pairwise distinct positive coordinates has the
  lonely-runner property~\eqref{eq:lrc}.  Then $p\mid v_1\cdots v_k$.
\end{enumerate}
\end{lemma}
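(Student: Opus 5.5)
We argue by contradiction: suppose $p\nmid v_1\cdots v_k$, so the residue vector $\vct{u}=\vct{v}\bmod p$ lies in $\Z_{p,1}^k$. Part (i) is the special case of (ii) in which every $\vct{u}$ is eventually proper, so we treat (ii) and split according to which alternative $\vct{u}$ satisfies. In either case we fix a level $l$: one at which $\vct{u}$ has no improper lift, or the level supplied by the second alternative. We then let $\vct{w}$ be the reduction of $\vct{v}$ modulo $lp$. This $\vct{w}$ is a lift of $\vct{u}$ to level $l$, and since no $v_i$ is divisible by $p$ it lies in $\Z_{p,l}^k$. The goal is to show that $\vct{w}$ cannot be proper. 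Once that is done, the second alternative forces $\vct{v}$, being an integer vector congruent to $\vct{w}$ modulo $lp$ with pairwise distinct positive coordinates, to have the LR property. That contradicts the choice of $\vct{v}$, and in the first alternative the non-properness of $\vct{w}$ is already impossible.

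Condition (b) is quickly ruled out. A witness $t\in(lp)^{-1}\Z$ for $\vct{w}$ is also a witness for $\vct{v}$, because $tv_i\equiv tw_i\pmod 1$ when $v_i\equiv w_i \pmod{lp}$. Thus (b) contradicts $\vct{v}$ being a counterexample.

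The substantive step is condition (a). Suppose a prime $q$ divides $l$ and every $w_j$ with $j\neq i$; then $q$ divides $v_j$ for all $j\neq i$. Primitivity gives $q\nmid v_i$. Following the standard reduction (as in \cite{ST26}, cf.\ also Tao's and Rosenfeld's arguments), we apply $LRC(k-1)$ to the tuple $(v_j/q)_{j\neq i}$ after removing repetitions. Its speeds may coincide, but the conjecture for fewer runners still applies, and repeated speeds only lower the effective count. It gives a time $s$ with $\nearest{s v_j/q}\geq 1/k$ for all $j\neq i$. The times $t=(s+a)/q$ for $a=0,\ldots,q-1$ all satisfy $\nearest{tv_j}\geq 1/k>1/(k+1)$ for $j\ne i$. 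As $a$ varies, $tv_i$ runs over the shifts $sv_i/q+av_i/q$, which are equally spaced with spacing $1/q$ modulo $1$ because $\gcd(v_i,q)=1$. The measure-style bound has to be made to work here, and we expect this step to be the main obstacle. Rather than using discrete points, we use the whole interval of good $s$. For each $j\neq i$, the set $\{s:\nearest{sv_j/q}\geq 1/(k+1)\}$ still contains a neighbourhood of width proportional to $1/k-1/(k+1)$ around the witness. Varying $s$ within that neighbourhood together with the $q$ translates covers a set of values of $tv_i$ of length at least $q\cdot(\text{width})\cdot v_i/q$. We need this to reach the complement of the bad arc, which has length $2/(k+1)$.

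If this direct estimate is too weak, we fall back on the cleaner known lemma (e.g.\ Lemma~2.2 of \cite{ST26}): if all speeds but one share a common factor coprime to the remaining speed, the LR property follows from $LRC(k-1)$. Its proof uses the witnesses for the scaled subtuple at level $k$ and the freedom to add multiples of $1/q$ to align $v_i$. We expect to invoke that lemma rather than rederive it. It shows that (a) would give $\vct{v}$ the LR property, a contradiction. Hence $\vct{w}$ is improper, which completes both parts. Distinctness and positivity of the $v_i$ are used only to match the hypothesis in the second alternative of (ii).
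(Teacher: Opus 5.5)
Your outline matches the paper's proof: argue by contradiction, reduce $\vct{v}$ modulo $lp$, transfer a grid witness of $\vct{w}$ to $\vct{v}$, and in the gcd case shift a lower-case witness by multiples of $1/q$. The gap is the last step of the gcd case. You flag it as the main obstacle and leave it unresolved. The interval (measure) estimate you sketch is not needed, and it does not give a uniform bound. Your witness $s$ is only guaranteed to stay good on a neighbourhood of radius of order $(1/k-1/(k+1))\,q/\max_{j\neq i}v_j$. The total length of $tv_i$-values swept out by the $q$ translates is therefore of order $q\,(1/k-1/(k+1))\,v_i/\max_{j\neq i}v_j$, which need not exceed $2/(k+1)$. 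Falling back on a lemma from \cite{ST26}, paraphrased from memory, then leaves exactly this step unproved.

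The missing observation is one line, and you had already set it up. As $a$ runs over $0,\ldots,q-1$, the points $tv_i=sv_i/q+av_i/q$ form a full coset of $\frac1q\Z$ modulo one, because $\gcd(v_i,q)=1$. Since $q\geq2$ and the largest gap between consecutive points is $1/q$, any arc containing all of them has length at least $1-1/q\geq\frac12$. The bad set $\{x:\nearest{x}<1/(k+1)\}$ is an open arc of length $2/(k+1)\leq\frac12$ because $k\geq3$. So it cannot contain all $q$ points; the boundary case $k=3$, $q=2$ is harmless because the arc is open. Hence some $a$ gives $\nearest{tv_i}\geq1/(k+1)$. Since $\nearest{tv_j}\geq1/k$ for $j\neq i$, this $t$ is a witness for $\vct{v}$. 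This is the paper's argument, stated there with $g=\gcd(l,w_j:j\neq i)$ and a grid of $g'=g/\gcd(g,v_i)\geq2$ points instead of a prime $q$. It is also the only place where the hypothesis $k\geq3$ is used, and your proposal never invokes that hypothesis.
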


\begin{proof}
Part (i) is Lemma~2.2 of~\cite{ST26} combined with Lemma~2.4 there.  We
recall the mechanism, since (ii) extends it.  Suppose $p\nmid v_i$ for all
$i$ and let $\vct{u}$ be the reduction of $\vct{v}$ modulo $p$.  Take a
level $l$ as in the hypothesis and let $\vct{w}$ be the reduction of
$\vct{v}$ modulo $lp$, a lift of $\vct{u}$.  If $\vct{w}$ has a witness
$t\in(lp)^{-1}\Z$, then $\nearest{tv_i}=\nearest{tw_i}\geq1/(k+1)$ for
all $i$, so $\vct{v}$ is not a counterexample.  If
$g=\gcd(l,w_j:j\neq i)>1$, then $g$ divides $v_j$ for all $j\neq i$, and
$g\nmid v_i$ because $\vct{v}$ is primitive.  The $k-1$ speeds $v_j$,
$j\neq i$, take $m\leq k-1$ distinct values, so $LRC(m)$ gives $t_0$ with
$\nearest{t_0v_j}\geq1/(m+1)\geq1/k$ for all $j\neq i$.  The shifted
times $t_0+q/g$, $q=0,\ldots,g-1$, leave every $t_0v_j$ ($j\neq i$) fixed
modulo one and move $t_0v_i$ through the $g'$ equally spaced points of a
grid with $g'=g/\gcd(g,v_i)\geq2$.  Such a grid cannot lie inside an open
arc of length $2/(k+1)\leq\frac12$: the shortest closed arc containing the
grid has length $1-1/g'\geq\frac12$, and equality is harmless because the
bad arc is open.  Thus some shift has
$\nearest{(t_0+q/g)v_i}\geq1/(k+1)$; at that shift every coordinate is at
distance $\geq1/(k+1)$ and $\vct{v}$ is not a counterexample.  In case
(ii), the third alternative applies directly to
$\vct{v}$ itself, which is congruent to $\vct{w}$ modulo $lp$ and has
pairwise distinct positive coordinates.  In every case $\vct{v}$ has the
lonely-runner property, contradicting the assumption; so some $v_i$ is
divisible by $p$.
\end{proof}

Part~(i) will prove all the required divisibilities for $k=13$ and most of
them for $k=14$.  For the remaining primes, part~(ii) and
Lemma~\ref{lem:neargcd} handle the residue classes left after lifting.

\begin{remark}[Unit orbits]
At the levels used below, $p>15$ and $\gcd(l,p)=1$.  If
$\vct{v}\equiv a r\pmod p$ with $a\in\Z_p^\times$, the Chinese
remainder theorem gives $b>0$ with $b\equiv a^{-1}\pmod p$ and
$b\equiv1\pmod l$, since $\gcd(l,p)=1$.  Then $(b\vct{v}\bmod lp)$ lifts
$r$, while $b\vct{v}$ remains positive and pairwise distinct, has the same
divisibility by divisors of $l$, and is a counterexample if and only if
$\vct{v}$ is.  Thus one orbit representative suffices.
\end{remark}

\section{The lattice flag bound and the forced divisor}\label{sec:flag}

This section follows the projected-lattice viewpoint of Giri and
Kravitz~\cite{GK26}.  We first place an ellipsoid inside the projected cube
and compute the lattice covolume in the norm defined by that ellipsoid.
We then bound the same covolume from below, using lower-dimensional cases
of the conjecture to constrain the Gram--Schmidt lengths of a
Korkine--Zolotarev basis.

Throughout this section $n\geq2$, $\vct{v}=(v_1,\ldots,v_n)\in\Z_{>0}^n$ is
primitive ($\gcd(v_1,\ldots,v_n)=1$), and ``counterexample'' means: for
every real $t$ some $i$ has $\nearest{tv_i}<1/(n+1)$.  We write
\[
  S=\sum_iv_i,\quad x_i=\frac{v_i}{S},\quad
  u_i=\frac{v_i}{\max_jv_j},\quad
  q_i=1+2u_i-u_i^2\in(1,2],\quad Q=\operatorname{diag}(q_1,\ldots,q_n).
\]
\Needspace{7\baselineskip}
Let $H=\vct{v}^{\perp}\subset\R^n$, let $P$ be the orthogonal projection
onto $H$, and let
\[
  \Lambda=P\Z^n,\qquad D=P[-1,1]^n,\qquad E=P\,Q^{1/2}B_2^n,
\]
\nopagebreak
where $B_2^n$ is the Euclidean unit ball.  $\Lambda$ is a lattice of rank
$d=n-1$ in $H$, with Euclidean covolume $\det\Lambda=1/\lVert\vct{v}\rVert_2$
(for primitive $\vct{v}$ the lattice $\Z^n\cap H$ has covolume
$\lVert\vct{v}\rVert_2$, and $P\Z^n$ is its dual in $H$).  $D$ is a zonotope
and $E$ is an origin-symmetric ellipsoid; we write $\Enorm{\cdot}$ for the
Euclidean norm on $H$ whose unit ball is $E$ and
$\langle\cdot,\cdot\rangle_E$ for its inner product.  Finally put
\begin{equation}\label{eq:F}
  F_n(\vct{x})^2=\prod_iq_i\cdot\sum_i\frac{x_i^2}{q_i},
  \qquad
  R_n(\vct{x})=\frac{n\,(\prod_ix_i)^{1/n}}{F_n(\vct{x})}.
\end{equation}
Here $F_n$ enters the covolume formula, while $R_n$ converts a bound on
$SF_n$ into a bound on the speed product.

\subsection{The ellipsoid and the covolume}

\begin{lemma}[Inclusion]\label{lem:incl}
$E\subseteq D$.
\end{lemma}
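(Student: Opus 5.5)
The plan is to compare support functions. Both $D=P[-1,1]^n$ and $E=PQ^{1/2}B_2^n$ are compact, convex and origin-symmetric subsets of $H$, so $E\subseteq D$ holds if and only if $h_E(\vct{y})\le h_D(\vct{y})$ for every $\vct{y}\in H$. For $\vct{y}\in H$ and any compact convex $K\subset\R^n$ we have $h_{PK}(\vct{y})=h_K(P\vct{y})=h_K(\vct{y})$. This gives
\[
  h_D(\vct{y})=\sum_i|y_i|,\qquad h_E(\vct{y})=\bigl\lVert Q^{1/2}\vct{y}\bigr\rVert_2=\Bigl(\sum_i q_iy_i^2\Bigr)^{1/2}.
\]
Write $a_i=|y_i|\ge0$ and $s=\sum_ia_i$. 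Since $q_i=1+u_i(2-u_i)$, the claim reduces to the quadratic inequality
\[
  \sum_i u_i(2-u_i)\,a_i^2\;\le\; s^2-\sum_ia_i^2=\sum_j a_j(s-a_j)\qquad\text{whenever }\textstyle\sum_iv_iy_i=0 .
\]

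The only information about $\vct{y}$ that we use is a consequence of the constraint. Dividing $\sum_i v_iy_i=0$ by $\max_j v_j$ gives $\sum_i u_iy_i=0$. Hence $u_ia_i\le\sum_{j\ne i}u_ja_j$ for every $i$. Since $u_j\le1$, this also gives $u_ia_i\le\sum_{j\neq i}a_j=s-a_i$.

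The key step is to split the right-hand side using the weights $u_j$ and $1-u_j$, both of which are nonnegative:
\[
  \sum_j a_j(s-a_j)=\sum_j u_ja_j(s-a_j)+\sum_j(1-u_j)a_j(s-a_j).
\]
For the second sum we use the termwise bound $s-a_j\ge u_ja_j$, which gives at least $\sum_j u_j(1-u_j)a_j^2$. For the first sum we expand $s-a_j=\sum_{i\ne j}a_i$ and swap the order of summation:
\[
  \sum_j u_ja_j\sum_{i\ne j}a_i=\sum_i a_i\sum_{j\ne i}u_ja_j\ge\sum_i a_i\cdot u_ia_i=\sum_i u_ia_i^2 ,
\]
where the inequality uses the first consequence of the constraint. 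Adding the two lower bounds yields $\sum_i\bigl(u_i+u_i(1-u_i)\bigr)a_i^2=\sum_iu_i(2-u_i)a_i^2$, which is exactly the required inequality.

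I expect the main obstacle to be finding this split. The naive termwise comparison $u_ia_i(2-u_i)\le s-a_i$ is false in general, for instance when $s-a_i=u_ia_i$ exactly. The needed cancellation only appears after the double sum in the first part is symmetrised, which brings in the weighted constraint $u_ia_i\le\sum_{j\ne i}u_ja_j$ rather than the cruder bound $u_ia_i\le s-a_i$. Everything else is routine: the support-function characterisation of inclusion, and the identity $h_{PK}=h_K$ on $H$.
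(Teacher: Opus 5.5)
Your proof is correct, and it departs from the paper's after the shared first step. Both arguments use support functions to reduce the inclusion to $\sum_i q_iy_i^2\le(\sum_i|y_i|)^2$ for $\vct{y}\in H$.

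The paper then argues polyhedrally. By convexity of the quadratic form it suffices to check the vertices of the section $\{\vct{y}\in H:\sum_i|y_i|\le1\}$. These are the points $(v_je_i-v_ie_j)/(v_i+v_j)$ where cross-polytope edges joining vertices of opposite sign meet $H$. At each such vertex the inequality becomes $(1-u_i)(1-u_j)\ge0$.

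You instead prove the inequality on all of $H$ at once. You split $\sum_ja_j(s-a_j)$ with the weights $u_j$ and $1-u_j$, and swap the order of summation in the first part. The only input from $H$ is the weighted polygon inequality $u_ia_i\le\sum_{j\ne i}u_ja_j$.

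Your route avoids the polyhedral step: no vertex description of the hyperplane section and no maximisation of a convex function over vertices is needed. It also shows the bound for every $\vct{y}$ whose weighted absolute values satisfy those polygon inequalities, which is a larger set than $H$.

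The paper's route shows where the inclusion is tight. Equality holds exactly at the two-coordinate vertices involving a fastest speed. So with $q=2$ for the fastest coordinate, no $q_i$ could be increased, which explains the formula $q_i=1+2u_i-u_i^2$. In your argument this extremality is hidden in the termwise bounds.
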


\begin{proof}
For $\vct{y}\in H$ the support functions are $h_D(\vct{y})=\sum_i|y_i|$
(the support function of a projection is the restriction of the support
function) and $h_E(\vct{y})=\sup_{|\vct{b}|\leq1}\langle
\vct{y},PQ^{1/2}\vct{b}\rangle=|Q^{1/2}\vct{y}|=(\sum_iq_iy_i^2)^{1/2}$,
since $P\vct{y}=\vct{y}$.  It suffices to show $\sum_iq_iy_i^2\leq1$ on
the polytope $\{\vct{y}\in H:\sum|y_i|\leq1\}$, and since the left side
is convex, at its vertices.  These are the points where the edges of the
cross-polytope meet $H$; an edge from $e_i$ to $-e_j$ meets $H$ at
$(v_je_i-v_ie_j)/(v_i+v_j)$, and edges between two vectors of equal sign
do not meet $H$ because $\vct{v}$ is positive.  At such a vertex,
\[
  \sum_kq_ky_k^2=\frac{q_iv_j^2+q_jv_i^2}{(v_i+v_j)^2}\leq1
  \iff (q_i-1)v_j^2+(q_j-1)v_i^2\leq2v_iv_j .
\]
With $q_i-1=u_i(2-u_i)$ and $v_i=Mu_i$, $M=\max_jv_j$, dividing by
$M^2u_iu_j$ turns the right-hand inequality into
$u_j(2-u_i)+u_i(2-u_j)\leq2$, that is, $(1-u_i)(1-u_j)\geq0$.
\end{proof}

\begin{lemma}[Covolume in the ellipsoidal norm]\label{lem:det}
The covolume of $\Lambda$ with respect to $\langle\cdot,\cdot\rangle_E$ is
\[
  \covol_E(\Lambda)=\frac{1}{S\,F_n(\vct{x})}.
\]
\end{lemma}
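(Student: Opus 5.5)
The plan is to compute the covolume on the dual side, where the ellipsoidal inner product turns into the restriction of the simple diagonal form $\vct{y}^{T}Q\vct{y}$. There are three steps: dualise, compute a covolume for the form $Q$ by slicing $\Z^n$ into layers, and take the reciprocal.

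\emph{Step 1: identify the dual norm.} From the proof of Lemma~\ref{lem:incl}, the support function of $E$ on $H$ is $h_E(\vct{y})=|Q^{1/2}\vct{y}|=(\vct{y}^TQ\vct{y})^{1/2}$. So the polar body $E^\circ\subset H$ is the ellipsoid $\{\vct{y}\in H:\vct{y}^TQ\vct{y}\le1\}$. For a centred ellipsoid $E$ with Gram form $G$ on $H$, the polar has Gram form $G^{-1}$ with respect to the Euclidean identification of $H$ with its dual. Hence the Euclidean dual lattice $\Lambda^*$ is also the dual of $\Lambda$ for $\langle\cdot,\cdot\rangle_E$ paired against $\langle\cdot,\cdot\rangle_{E^\circ}$, and
\[
\covol_E(\Lambda)\cdot\covol_{Q}(\Lambda^*)=1 .
\]
Here $\covol_Q$ denotes covolume with respect to $\vct{y}^TQ\vct{y}$ restricted to $H$. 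The identity holds because both covolumes are the Euclidean covolumes rescaled by $\sqrt{\det G}$ and $\sqrt{\det G^{-1}}$ respectively. As recalled before Lemma~\ref{lem:incl}, $\Lambda=P\Z^n$ is the Euclidean dual of $\Z^n\cap H$ in $H$, since $\vct{v}$ is primitive. So $\Lambda^*=\Z^n\cap H$.

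\emph{Step 2: compute $\covol_Q(\Z^n\cap H)$ by layers.} Because $\vct{v}$ is primitive, $\Z^n$ is the disjoint union of the translates $\{\vct{y}\in\Z^n:\vct{v}\cdot\vct{y}=c\}$, $c\in\Z$. These lie on consecutive parallel hyperplanes. With respect to the full inner product $Q$ on $\R^n$ this gives
\[
\sqrt{\det Q}=\covol_Q(\Z^n)=\covol_Q(\Z^n\cap H)\cdot\delta_Q .
\]
Here $\delta_Q$ is the $Q$-distance between the hyperplanes $\vct{v}\cdot\vct{y}=0$ and $\vct{v}\cdot\vct{y}=1$. The linear functional $\vct{y}\mapsto\vct{v}\cdot\vct{y}$ has $Q$-dual norm $|Q^{-1/2}\vct{v}|$, so $\delta_Q=1/|Q^{-1/2}\vct{v}|$. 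Therefore
\[
\covol_Q(\Z^n\cap H)=\Bigl(\prod_iq_i\Bigr)^{1/2}\Bigl(\sum_i v_i^2/q_i\Bigr)^{1/2}=S\,F_n(\vct{x}),
\]
after substituting $v_i=Sx_i$ into \eqref{eq:F}.

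\emph{Step 3: combine.} Step~1 then gives $\covol_E(\Lambda)=1/(S F_n(\vct{x}))$, as claimed. As a sanity check, setting $Q=I$ should recover $\det\Lambda=1/\lVert\vct{v}\rVert_2$, and it does.

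The main point needing care is Step~1: that the $E$-norm on $H$ is exactly dual to the restriction of $Q$ to $H$, rather than to some compression of $Q$. This is the bipolar relation $E=(E^\circ)^\circ$ inside $H$, together with the formula for $h_E$. The formula $h_E(\vct{y})=|Q^{1/2}\vct{y}|$ is valid precisely because $\vct{y}\in H$ gives $P\vct{y}=\vct{y}$. Once that identification is in place, the layer decomposition in Step~2 is routine.
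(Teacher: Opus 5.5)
Your proof is correct. Its first step is the same as the paper's: both read off from $h_E(\vct{y})=|Q^{1/2}\vct{y}|$ on $H$ that the polar of $E$ is the restriction of $Q$ to $H$, so $E$ has Gram matrix $(U^{\mathsf T}QU)^{-1}$ in orthonormal coordinates. After that the two arguments differ. The paper keeps $\Lambda$ and writes $\covol_E(\Lambda)=\det\Lambda\cdot\det(U^{\mathsf T}QU)^{-1/2}$, using the quoted value $\det\Lambda=1/\lVert\vct{v}\rVert_2$. It then evaluates $\det(U^{\mathsf T}QU)=\det Q\cdot\tilde{\vct{v}}^{\mathsf T}Q^{-1}\tilde{\vct{v}}$ by a Schur complement in the orthogonal frame $[U\mid\tilde{\vct{v}}]$. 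You instead dualise to $\Z^n\cap H$ and compute geometrically. You slice $\Z^n$ into the layers $\vct{v}\cdot\vct{y}=c$ and measure their $Q$-spacing through the dual norm $|Q^{-1/2}\vct{v}|$ of the normal functional. Since $\covol_Q(\Z^n\cap H)=\lVert\vct{v}\rVert_2\det(U^{\mathsf T}QU)^{1/2}$, your base-times-height identity $\sqrt{\det Q}=\covol_Q(\Z^n\cap H)/|Q^{-1/2}\vct{v}|$ is exactly the paper's Schur-complement identity in geometric form. Your version explains where $\sum_iv_i^2/q_i$ comes from: it is the squared $Q$-dual norm of the functional defining $H$. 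Its case $Q=I$ also reproves $\det(\Z^n\cap H)=\lVert\vct{v}\rVert_2$, which the paper takes as given; this is where primitivity enters your argument, making the layers consecutive. The paper's version proves the determinant identity by linear algebra alone, with no lattice involved, and brings in the lattice only through $\det\Lambda$.
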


\begin{proof}
Let $U$ be an $n\times(n-1)$ matrix whose columns form an orthonormal
basis of $H$ and put $\tilde{\vct{v}}=\vct{v}/\lVert\vct{v}\rVert_2$.  In
the coordinates $\vct{y}=U^{\mathsf T}\vct{h}$ on $H$, the polar body
$\{\vct{h}\in H:h_E(\vct{h})\leq1\}$ has Gram matrix $U^{\mathsf T}QU$ by
the computation of $h_E$ above, so $E$ itself has Gram matrix
$(U^{\mathsf T}QU)^{-1}$ and
$\covol_E(\Lambda)=\det\Lambda\cdot\det(U^{\mathsf T}QU)^{-1/2}$.  The
orthogonal matrix $O=[U\mid\tilde{\vct{v}}]$ gives
$\det Q=\det(O^{\mathsf T}QO)
 =\det(U^{\mathsf T}QU)\cdot\bigl(\tilde{\vct{v}}^{\mathsf T}Q\tilde{\vct{v}}
 -\tilde{\vct{v}}^{\mathsf T}QU(U^{\mathsf T}QU)^{-1}U^{\mathsf T}Q\tilde{\vct{v}}\bigr)$
by the Schur complement, and the bracket is the reciprocal of the
$(n,n)$ entry of $(O^{\mathsf T}QO)^{-1}=O^{\mathsf T}Q^{-1}O$, namely
$1/(\tilde{\vct{v}}^{\mathsf T}Q^{-1}\tilde{\vct{v}})$.  Hence
$\det(U^{\mathsf T}QU)=\det Q\cdot\tilde{\vct{v}}^{\mathsf T}Q^{-1}\tilde{\vct{v}}
=\prod_iq_i\cdot\sum_iv_i^2/q_i\big/\lVert\vct{v}\rVert_2^2$, and
\[
  \covol_E(\Lambda)^2
  =\frac{1}{\lVert\vct{v}\rVert_2^2}\cdot
   \frac{\lVert\vct{v}\rVert_2^2}{\prod_iq_i\sum_iv_i^2/q_i}
  =\frac{1}{S^2\prod_iq_i\sum_ix_i^2/q_i}
  =\frac{1}{S^2F_n(\vct{x})^2}.\qedhere
\]
\end{proof}

\subsection{Points with all coordinates far from integers}

The following lemma is Lemma~3.3 of Giri and Kravitz~\cite{GK26} in a
different normalization: in their notation it says
$\max\mathcal S_q(n)=\max\mathcal S_1(n+1-q)$, and $LRC(n+1-q)$ bounds
the right-hand side.  We include a short proof along their lines, since
the statement is used with every $q$ below.

\begin{lemma}[Separated points on rational subspaces]\label{lem:safe}
Let $1\leq q\leq n$ and assume $LRC(m)$ for every $m\leq n+1-q$.  Let
$V\subseteq\R^n$ be a rational subspace of dimension $q$ on which no
coordinate functional vanishes identically.  Then $V$ contains a point
$\vct{s}$ with
\[
  \nearest{s_i}\geq\frac{1}{n+2-q}\qquad(1\leq i\leq n).
\]
\end{lemma}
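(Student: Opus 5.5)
We argue by induction on $q$, cutting $V$ down to a rational line whose coordinate functionals take few distinct absolute values, and then applying the conjecture for that number of speeds. For a rational subspace $W$ on which no coordinate functional vanishes, let $N(W)$ be the number of distinct functionals among $\pm x_1|_W,\ldots,\pm x_n|_W$, counted up to sign. Trivially $N(V)\leq n$.

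\emph{Base case $q=1$.} Write $V=\R\vct{w}$ with $\vct{w}\in\Z^n$ having no zero coordinate. The values $|w_i|$ take $m=N(V)$ distinct values. The tuple of these values is a speed tuple with $m$ speeds, so $LRC(m)$ gives a $t$ with $\nearest{t|w_i|}\geq 1/(m+1)$ for every $i$. Since $\nearest{-x}=\nearest{x}$, the point $\vct{s}=t\vct{w}$ has $\nearest{s_i}\geq1/(m+1)$. The induction below ensures $m\leq n+1-q$ for the original $q$, so this is at least $1/(n+2-q)$. Only $LRC(m)$ with $m\leq n+1-q$ is used, as the hypothesis allows.

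\emph{Inductive step.} Given $W$ of dimension $r\geq2$ with no vanishing coordinate functional, I want a rational hyperplane $W'\subset W$ with three properties:
\begin{itemize}[leftmargin=*]
\item no coordinate functional vanishes on $W'$;
\item two coordinates $i,j$ with $x_i|_W\neq\pm x_j|_W$ satisfy $x_i=\varepsilon x_j$ on $W'$ for some sign $\varepsilon$;
\item consequently $N(W')\leq N(W)-1$, since restricting to $W'$ can only merge classes and $i,j$ merge.
\end{itemize}
Iterating $q-1$ times from $V$ yields a rational line $L$ with $N(L)\leq n-(q-1)=n+1-q$ and no vanishing coordinate, and the base case finishes. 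Such non-proportional $x_i|_W,x_j|_W$ exist because the restricted coordinate functionals span $W^*$, which has dimension $r\geq2$. The natural choice is $W'=\ker(x_i-\varepsilon x_j)\cap W$, which is rational. A coordinate $x_k$ vanishes on $W'$ exactly when $x_k|_W$ is a nonzero multiple of $x_i|_W-\varepsilon x_j|_W$.

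\emph{Main obstacle.} The difficulty is choosing the pair $(i,j)$ and the sign $\varepsilon$ so that no coordinate is killed. The bad event is that both $x_i-x_j$ and $x_i+x_j$ are proportional on $W$ to coordinate functionals $x_k$ and $x_l$. My first attempt is an extremal choice. Among all triples $(i,j,\varepsilon)$ with $x_i,x_j$ non-proportional on $W$, look at the $2$-dimensional space $\spn(x_i|_W,x_j|_W)$. The finitely many coordinate functionals lying in it give finitely many lines in a plane. Pick two of them that are adjacent in angle, after choosing the normalisation so that the bad functionals $x_i\pm x_j$ would have to lie strictly between them. If $x_i-\varepsilon x_j$ were proportional to some $x_k$, that $x_k$ would contradict adjacency for the appropriate sign $\varepsilon$. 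The care needed is in the normalisation: the coordinate functionals are fixed, not rescalable, so "adjacent" must be measured so that it is compatible with the lines through $x_i\pm x_j$.

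\emph{Fallback.} If this planar argument fails, I would follow Giri and Kravitz~\cite{GK26} and reduce instead to the $q=1$ statement in dimension $n+1-q$. Choose $q-1$ coordinates whose functionals are independent on $V$ and fix them at a suitable common value, for instance $\tfrac12$ or generic points of $[\tfrac1{n+2-q},1-\tfrac1{n+2-q}]$. Then solve for the remaining coordinates on an affine line inside $V$, using the shift trick from Lemma~\ref{lem:gate}.
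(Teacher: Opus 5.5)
Your reduction is the same as the paper's. You cut $V$ by a rational hyperplane on which two non-proportional coordinate functionals agree up to sign and none vanishes, repeat this $q-1$ times, and apply $LRC$ to the distinct values $|w_i|$ on the final line. Tracking $N(W)$ in place of the paper's deletion of the $j$-th coordinate and induction over $n$ is equivalent bookkeeping. Your base case and the count $N(L)\leq n+1-q$ are fine. What is missing is a committed argument for the one step that carries the lemma: choosing $(i,j,\varepsilon)$ so that $x_i-\varepsilon x_j$ is proportional on $W$ to no coordinate functional. You give this only as a tentative first attempt, with a worry about normalisation and a fallback.

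Your first attempt does close the gap, and the normalisation worry is unfounded. In the plane $\Pi$ spanned by two non-proportional restricted functionals, take two adjacent coordinate lines $\R x_i|_W$ and $\R x_j|_W$, so that some open sector bounded by them contains no coordinate line. Choose $\varepsilon$ so that this sector is the open cone spanned by $x_i|_W$ and $-\varepsilon x_j|_W$. Then $x_i|_W-\varepsilon x_j|_W$, a positive combination of these two vectors, lies in that open cone whatever their lengths. A coordinate functional proportional to it would lie in $\Pi$ on a line through the empty sector, which is impossible.

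The paper makes the same choice globally. It identifies $V^{*}$ with $V$ and takes the non-proportional pair $\vct{a}_i,\vct{a}_j$ whose lines form the smallest angle. It flips $\vct{a}_j$ so that $\langle\vct{a}_i,\vct{a}_j\rangle\geq0$. Then $\vct{a}_i+\vct{a}_j$ spans a line at strictly smaller angle to $\R\vct{a}_i$, so minimality forbids any $\vct{a}_k$ on it.

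You should drop the fallback. Fixing $q-1$ coordinates leaves an affine line in $V$ that misses the origin, which is an inhomogeneous problem that $LRC$ does not address. Coordinates determined by the fixed ones are uncontrolled constants; with all fixed values $\frac12$, one of them can be an integer. Nor is there a common divisor available for the shift trick of Lemma~\ref{lem:gate}.
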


\begin{proof}
Induction on $q$, simultaneously for all $n$.  For $q=1$, $V=\R\vct{a}$
with $\vct{a}\in\Z^n$ primitive and all $a_i\neq0$.  The values $|a_i|$
are $m\leq n$ distinct positive integers;
$LRC(m)$ gives $t$ with $\nearest{ta_i}\geq1/(m+1)\geq1/(n+1)$, and
$\vct{s}=t\vct{a}$ works.

For $q\geq2$, we will pass to a hyperplane on which two coordinates agree
up to sign and no coordinate vanishes.  We can then delete one coordinate
and apply induction in $\R^{n-1}$.
The restrictions of the coordinate functionals to $V$ are
nonzero vectors $\vct{a}_1,\ldots,\vct{a}_n\in V$ (identify $V^{*}$ with
$V$ through the Euclidean inner product); they span $V^{*}$, so not all of
them are proportional.  Among all non-proportional pairs choose $(i,j)$
minimizing the angle between the lines $\R\vct{a}_i$ and $\R\vct{a}_j$,
and replace $\vct{a}_j$ by $-\vct{a}_j$ if necessary so that
$\langle\vct{a}_i,\vct{a}_j\rangle\geq0$; this amounts to changing the
sign of the $j$-th coordinate, which does not affect $\nearest{\cdot}$.
Put $\vct{h}=\vct{a}_i+\vct{a}_j\neq0$.  The line $\R\vct{h}$ makes a
strictly smaller angle with $\R\vct{a}_i$ than $\R\vct{a}_j$ does, so by
minimality $\vct{h}$ is proportional to no $\vct{a}_k$.  Let
$V'=\{\vct{y}\in V:\langle\vct{h},\vct{y}\rangle=0\}$, a rational
subspace of dimension $q-1$ on which $y_i=-y_j$ (after the sign change).
No coordinate vanishes on $V'$: the functional $\vct{a}_k$ vanishes on
$V'=\vct{h}^{\perp}\cap V$ only if $\vct{a}_k\in\R\vct{h}$.  Deleting the
$j$-th coordinate maps $V'$ injectively onto a rational subspace
$V''\subseteq\R^{n-1}$ of dimension $q-1$ with no vanishing coordinate.
  The induction hypothesis applies to $V''$ with $n-1$ in place of $n$; it
  requires $LRC(m)$ for $m\leq(n-1)+1-(q-1)=n+1-q$.  It gives
  $\vct{s}''\in V''$ with all coordinates at distance at least
  $1/((n-1)+2-(q-1))=1/(n+2-q)$.  Its preimage $\vct{s}\in V'$ has
  $\nearest{s_j}=\nearest{-s_i}=\nearest{s_i}$, so all $n$ coordinates
  satisfy the bound.
\end{proof}

\subsection{Prefix inequalities for the Gram--Schmidt lengths}

Let $\vct{b}_1,\ldots,\vct{b}_d$ be any basis of $\Lambda$ ($d=n-1$), let
$\vct{b}_i^{*}$ be its Gram--Schmidt vectors with respect to
$\langle\cdot,\cdot\rangle_E$, and put $t_i=\Enorm{\vct{b}_i^{*}}^2>0$.
Then $\prod_it_i=\covol_E(\Lambda)^2$.

If the first $r$ Gram--Schmidt lengths were too small, nearest-plane
rounding would turn the point from Lemma~\ref{lem:safe} into a witness for
the speed tuple.  This bounds each initial segment from below.

\begin{proposition}[Prefix inequalities]\label{prop:prefix}
Assume $LRC(m)$ for all $m<n$ and let $\vct{v}$ be a counterexample.  Then
for every $r=1,\ldots,d$,
\begin{equation}\label{eq:prefix}
  t_1+\cdots+t_r\;>\;B_r,\qquad
  B_r:=\Bigl(\frac{2r}{(n+1)(n+1-r)}\Bigr)^{2}.
\end{equation}
\end{proposition}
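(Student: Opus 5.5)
The plan is to argue by contradiction. If $t_1+\cdots+t_r\le B_r$ for some $r$, I will build a witness for $\vct{v}$ by combining three ingredients: Lemma~\ref{lem:safe} applied to a rational subspace built from the first $r$ basis vectors, nearest-plane rounding in the lattice spanned by $\vct{b}_1,\ldots,\vct{b}_r$, and the inclusion $E\subseteq D$ of Lemma~\ref{lem:incl}.

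First I need a projection dictionary. For $\vct{a}\in\R^n$ and $\rho>0$, a point lies in $P^{-1}(\rho D)$ exactly when it lies in $\rho[-1,1]^n+\R\vct{v}$. So if $P\vct{a}\in\rho D$, there is a real $t$ with $\vct{a}-t\vct{v}\in\rho[-1,1]^n$. Moreover, if $\vct{a}=\vct{s}-\vct{z}$ with $\vct{z}\in\Z^n$, then $\nearest{tv_i}\ge\nearest{s_i}-\rho$ for every $i$, by the triangle inequality for $\nearest{\cdot}$.

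Next, fix $r$ and let $W=\spn(\vct{b}_1,\ldots,\vct{b}_r)\subseteq H$. Since each $\vct{b}_i=P\vct{w}_i$ with $\vct{w}_i\in\Z^n$, the subspace $V=P^{-1}(W)=\spn(\vct{w}_1,\ldots,\vct{w}_r,\vct{v})$ is rational of dimension $q=r+1$. No coordinate functional vanishes on $V$, because $\vct{v}\in V$ has all coordinates positive. Lemma~\ref{lem:safe} applies since it needs $LRC(m)$ only for $m\le n+1-q=n-r<n$. It gives $\vct{s}\in V$ with $\nearest{s_i}\ge 1/(n+1-r)$ for all $i$.

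Now $P\vct{s}\in W$. Babai's nearest-plane rounding with respect to $\langle\cdot,\cdot\rangle_E$ on the basis $\vct{b}_1,\ldots,\vct{b}_r$ produces an integer combination $\lambda=\sum m_i\vct{b}_i=P\vct{z}$, where $\vct{z}=\sum m_i\vct{w}_i\in\Z^n$. It satisfies $P\vct{s}-\lambda=\sum c_i\vct{b}_i^{*}$ with $|c_i|\le\frac12$. Gram--Schmidt orthogonality then gives
\[
  \Enorm{P\vct{s}-\lambda}^2\le\tfrac14(t_1+\cdots+t_r).
\]
Put $\rho=\tfrac12(t_1+\cdots+t_r)^{1/2}$. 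Then $P(\vct{s}-\vct{z})\in\rho E\subseteq\rho D$ by Lemma~\ref{lem:incl}. By the dictionary there is a $t$ with
\[
  \nearest{tv_i}\ge\frac{1}{n+1-r}-\rho\qquad(1\le i\le n).
\]
This is a witness as soon as $\rho\le\frac1{n+1-r}-\frac1{n+1}=\frac{r}{(n+1)(n+1-r)}$, which is exactly the condition $t_1+\cdots+t_r\le B_r$. A counterexample therefore forces the strict inequality~\eqref{eq:prefix}. Equality is excluded because the witness condition~\eqref{eq:lrc} is non-strict.

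I expect the main obstacle to be the bookkeeping that makes the chosen subspace satisfy the hypotheses of Lemma~\ref{lem:safe}. The point is that $V$ must be the preimage of $W$, not $W$ itself: $W$ may well have coordinates vanishing identically on it, and its dimension $r$ would give the weaker bound $1/(n+2-r)$. Taking the preimage raises the dimension to $r+1$ and supplies the non-vanishing coordinates through $\vct{v}$. A second point to check is that the rounding lands in $P\Z^n$ via integer lifts $\vct{w}_i$, so the correction $\vct{z}$ is genuinely an integer vector. Everything else is the routine Babai estimate and the triangle inequality.
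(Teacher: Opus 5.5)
Your proposal is correct and follows essentially the same route as the paper's proof. Both use the rational $(r+1)$-dimensional subspace $\spn(\vct{v},\vct{z}_1,\ldots,\vct{z}_r)$, Lemma~\ref{lem:safe} with $q=r+1$, nearest-plane rounding in $\langle\cdot,\cdot\rangle_E$, and $E\subseteq D$ (Lemma~\ref{lem:incl}) to write the rounding error as a cube-bounded vector plus a multiple of $\vct{v}$; your ``projection dictionary'' is only a repackaging of the paper's step $\vct{e}=P\vct{y}$, $\vct{s}-\sum_ik_i\vct{z}_i-\vct{y}=t\vct{v}$.
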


\begin{proof}
Fix $r$ and choose $\vct{z}_i\in\Z^n$ with $P\vct{z}_i=\vct{b}_i$
($1\leq i\leq r$).  The subspace $V=\spn(\vct{v},\vct{z}_1,\ldots,\vct{z}_r)$
is rational and has dimension $r+1$: if
$a\vct{v}+\sum a_i\vct{z}_i=0$, applying $P$ gives $\sum a_i\vct{b}_i=0$,
so all $a_i=0$ and then $a=0$.  No coordinate vanishes on $V$, since
$\vct{v}\in V$ has positive coordinates.  Lemma~\ref{lem:safe} with
$q=r+1$ (it needs $LRC(m)$ for $m\leq n-r\leq n-1$) gives
$\vct{s}=a\vct{v}+\sum_ia_i\vct{z}_i\in V$ with
$\nearest{s_k}\geq1/(n+1-r)$ for all $k$.

Apply nearest-plane rounding to the coefficients: choose integers
$k_r,k_{r-1},\ldots,k_1$ in this order so that
$\vct{e}:=\sum_{i\leq r}(a_i-k_i)\vct{b}_i=\sum_{i\leq r}\theta_i\vct{b}_i^{*}$
has $|\theta_i|\leq\frac12$ for all $i$.  At step $i$, the coefficient of
$\vct{b}_i^{*}$ is $a_i-k_i$ plus a quantity already fixed by
$k_{i+1},\ldots,k_r$.  We can therefore choose $k_i$ to bring it into
$[-\frac12,\frac12]$.  By orthogonality,
\begin{equation}\label{eq:round}
  \varepsilon:=\Enorm{\vct{e}}\leq\tfrac12\sqrt{t_1+\cdots+t_r}.
\end{equation}
By Lemma~\ref{lem:incl}, $\vct{e}\in\varepsilon E\subseteq\varepsilon D$,
so $\vct{e}=P\vct{y}$ for some $\vct{y}\in\R^n$ with $|y_k|\leq\varepsilon$
for all $k$.  The vector $\sum_i(a_i-k_i)\vct{z}_i-\vct{y}$ has projection
$\vct{e}-\vct{e}=0$, so it equals $c\vct{v}$ for some real $c$.  It follows that
\[
  \vct{s}-\sum_ik_i\vct{z}_i-\vct{y}=(a+c)\vct{v}=:t\vct{v},
\]
and since $\sum_ik_i\vct{z}_i\in\Z^n$,
\[
  \nearest{tv_k}=\nearest{s_k-y_k}\geq\nearest{s_k}-|y_k|
  \geq\frac{1}{n+1-r}-\varepsilon\qquad(1\leq k\leq n).
\]
If $\varepsilon\leq\frac1{n+1-r}-\frac1{n+1}=\frac{r}{(n+1)(n+1-r)}$,
then $t$ is a witness for $\vct{v}$, contradicting that $\vct{v}$ is a
counterexample.  So $\varepsilon>r/((n+1)(n+1-r))$, and
\eqref{eq:round} gives $t_1+\cdots+t_r\geq4\varepsilon^2>B_r$.
\end{proof}

The rounding argument has a geometric interpretation.  Every point of
the rational subtorus spanned by $\vct{v},\vct{z}_1,\ldots,\vct{z}_r$
lies within $\tfrac12\sqrt{t_1+\cdots+t_r}$ of the orbit
$t\mapsto t\vct{v}\pmod{\Z^n}$ in each coordinate.  If this distance
bound were at most $r/((n+1)(n+1-r))$, the point supplied by
Lemma~\ref{lem:safe} would give a witness, as the proof shows.
For $r=1$ this recovers the two-dimensional subtorus argument of
Giri and Kravitz~\cite[\S7]{GK26}, with a sharper estimate.  For $r=d$
the inequality reads $\sum_it_i>((n-1)/(n+1))^2$ and concerns the whole
lattice.  We use all these inequalities together, one for each initial
segment of the basis.

\subsection{Korkine--Zolotarev bases and the product minimization}

\begin{lemma}[KZ bases]\label{lem:kz}
$\Lambda$ has a basis $\vct{b}_1,\ldots,\vct{b}_d$ whose Gram--Schmidt
lengths satisfy $t_{i+1}\geq\frac34t_i$ for $1\leq i<d$.
\end{lemma}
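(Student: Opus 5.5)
We will take a Korkine--Zolotarev (KZ) basis of $\Lambda$ with respect to the Euclidean structure $\langle\cdot,\cdot\rangle_E$ and show that it satisfies the stated inequality. Since $\Enorm{\cdot}$ is a Euclidean norm on $H$, we may transport everything by a linear isometry to standard $\R^d$, so it suffices to prove the claim for an arbitrary full-rank lattice in Euclidean space.

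\textbf{Construction.} Recall the recursive definition of a KZ basis. We take $\vct{b}_1$ to be a shortest nonzero vector of $\Lambda$; it is primitive, so it extends to a basis. Let $\pi_1$ denote the orthogonal projection onto $\vct{b}_1^{\perp}$. The projected lattice $\pi_1(\Lambda)$ has rank $d-1$. We choose a KZ basis of $\pi_1(\Lambda)$ by induction on the rank. Then we lift each of its vectors to a preimage in $\Lambda$, and we size-reduce these lifts against $\vct{b}_1$; size reduction is optional and does not affect the Gram--Schmidt lengths. The resulting family is a basis of $\Lambda$, because $\Lambda\to\pi_1(\Lambda)$ is surjective with kernel $\Z\vct{b}_1$. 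Its Gram--Schmidt vectors are $\vct{b}_1$ followed by the Gram--Schmidt vectors of the chosen basis of $\pi_1(\Lambda)$. Consequently each $\vct{b}_i^{*}$ is a shortest nonzero vector of the projected lattice $\pi_{i-1}(\Lambda)$, where $\pi_{i-1}$ is the projection orthogonal to $\vct{b}_1,\ldots,\vct{b}_{i-1}$.

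\textbf{Key inequality.} Fix $i<d$ and set $L=\pi_{i-1}(\Lambda)$. Then $\vct{b}_i^{*}$ is shortest in $L$, and $\vct{b}_{i+1}^{*}$ is shortest in the projection of $L$ orthogonal to $\vct{b}_i^{*}$. Write $\vct{w}=\pi_{i-1}(\vct{b}_{i+1})\in L$. This vector decomposes as $\vct{w}=\vct{b}_{i+1}^{*}+\mu\vct{b}_i^{*}$. Subtracting an integer multiple of $\vct{b}_i^{*}\in L$, we may assume $|\mu|\leq\frac12$ while $\vct{w}$ stays in $L$ and nonzero, since its component $\vct{b}_{i+1}^{*}$ is nonzero. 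Minimality of $\vct{b}_i^{*}$ in $L$ then gives
\[
t_i\leq\Enorm{\vct{w}}^2=t_{i+1}+\mu^2t_i\leq t_{i+1}+\tfrac14t_i,
\]
that is, $t_{i+1}\geq\frac34t_i$.

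\textbf{Main obstacle.} The only real point to check is that the recursively lifted family is a basis of $\Lambda$ with the claimed Gram--Schmidt vectors. Two standard facts settle this. First, a shortest vector is primitive, which is needed for the extension to a basis. Second, basis vectors of a projected lattice lift to a basis of $\Lambda$ once $\vct{b}_1$ is added. Both follow from a short exact-sequence argument for $0\to\Z\vct{b}_1\to\Lambda\to\pi_1(\Lambda)\to0$. Alternatively, one can simply cite the standard existence of KZ-reduced bases (and even LLL bases with $\delta=\frac34$ would suffice, since the Lov\'asz condition with size reduction gives exactly $t_{i+1}\geq(\frac34-\mu^2)t_i$, which is weaker). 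We will use the KZ construction, so that the paper can also exploit the extra minimality of each $\vct{b}_i^{*}$ later.
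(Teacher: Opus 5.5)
Your proof is correct and follows the paper's argument: take a Korkine--Zolotarev basis (the paper only cites its existence, while you sketch the construction) and compare the shortest vector $\vct{b}_i^{*}$ of $\pi_{i-1}(\Lambda)$ with the size-reduced nonzero vector $\pi_{i-1}(\vct{b}_{i+1})-k\vct{b}_i^{*}$, whose squared length is at most $t_{i+1}+\frac14t_i$. Your aside on LLL is wrong, though: with $\delta=\frac34$ the Lov\'asz condition gives only $t_{i+1}\geq(\frac34-\mu^2)t_i\geq\frac12t_i$, which does \emph{not} suffice (you would need $\delta=1$); this does not affect your main argument.
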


\begin{proof}
A Korkine--Zolotarev basis exists for every lattice~\cite{KZ1873,LLS90}.
In the projected lattice, $\vct{b}_i^*$ is shortest, while size reduction
of $\vct{b}_{i+1}$ gives a nonzero vector of squared length at most
$t_{i+1}+t_i/4$.  Hence $t_i\leq t_{i+1}+t_i/4$.
\end{proof}

Put $\rho=\frac34$, $B_0=0$, and $c_i=B_i-B_{i-1}$ for $1\leq i\leq d$.
We now bound $\prod_i t_i$, the squared covolume.  Under the hypotheses
below, its minimum occurs when every partial-sum constraint is an equality.

\begin{lemma}[Product minimization]\label{lem:minimize}
Suppose $c_{i+1}/c_i>1/\rho$ for $1\leq i<d$.  If positive reals
$t_1,\ldots,t_d$ satisfy $t_{i+1}\geq\rho t_i$ for all $i<d$ and
$t_1+\cdots+t_r\geq B_r$ for all $r\leq d$, then
$\prod_it_i\geq\prod_ic_i$, with equality at $t_i=c_i$.
\end{lemma}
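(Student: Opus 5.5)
The plan is to use concavity of $\log\prod_i t_i$ to reduce to finitely many candidate points, the vertices of the feasible polyhedron, and then to use the hypothesis $c_{i+1}/c_i>1/\rho$ to show that every vertex other than $t=c$ has a larger product.

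\textbf{Step 1: the minimum is attained at a vertex.} Let $\mathcal P$ be the polyhedron cut out by the $2d-1$ linear inequalities $t_{i+1}\ge\rho t_i$ and $T_r:=t_1+\cdots+t_r\ge B_r$. Since $B_1>0$, all of these force $t_i>0$. The recession cone consists of directions $\delta$ with $\delta_1\ge0$ (from $r=1$) and $\delta_{i+1}\ge\rho\delta_i$, so by induction $\delta\ge0$. Hence $\prod t_i$ is nondecreasing along every ray of $\mathcal P$. Moreover $\log\prod t_i$ is concave on the open orthant and continuous up to $\mathcal P$, so its infimum over $\mathcal P$ is attained at a vertex. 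A vertex is a feasible point where $d$ linearly independent constraints are tight.

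\textbf{Step 2: structure of a vertex.} The constraint $T_d\ge B_d$ must be tight at the minimizer, since otherwise scaling every $t_i$ down by a common factor keeps all ratio constraints and decreases the product. So let $0=r_0<r_1<\dots<r_m=d$ be the indices of the tight prefix constraints. Counting independent constraints, on each block $(r_{j-1},r_j]$ every internal ratio constraint must be tight. Hence each block is a geometric progression $t_i=\alpha_j\rho^{\,i-r_{j-1}-1}$ whose sum equals $B_{r_j}-B_{r_{j-1}}=\sum_{i\in\text{block}}c_i$. The candidate $t=c$ is the vertex with all $m=d$ blocks of length one, and it is feasible because $c_{i+1}\ge c_i>\rho c_i$.

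\textbf{Step 3: compare each block with $c$.} On a block, the vertex vector $g$ and $c$ have equal sums. However, $g$ decreases (ratio $\rho<1$), while $c$ increases with ratio $>1/\rho$. I would argue blockwise that $\prod g_i>\prod c_i$ whenever the block has length $\ge2$. To do this I would first note that $c$ restricted to the block is far more spread out than $g$: sorted in increasing order, $g$ has consecutive ratios $1/\rho$, while $c$ has ratios exceeding $1/\rho$. This should give that $c$ majorizes $g$ on the block. The route is to compare partial sums of the $k$ smallest entries of each, which satisfy $\sum c_{\text{small}}\le\sum g_{\text{small}}$ because a geometric sequence with a larger ratio and the same total puts less mass on its small entries. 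Schur-concavity of the product then gives $\prod g\ge\prod c$, with strict inequality unless $g=c$, which cannot happen for blocks of length $\ge2$. Multiplying over blocks yields $\prod t_i\ge\prod c_i$ at every vertex, with equality only at $t=c$.

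\textbf{Main obstacle.} The main obstacle is Step 3, specifically proving the majorization of $g$ by $c$ on a block. The ratio comparison is pointwise, but the constants must be matched through the equal-sum normalization. I would prove it by a single-crossing argument. Compare the two sequences sorted in increasing order: $g^{\uparrow}_k/c^{\uparrow}_k$ is strictly decreasing in $k$. With equal totals, $g^{\uparrow}-c^{\uparrow}$ therefore changes sign exactly once, from positive to negative, and this immediately gives the partial-sum inequalities. If that ordering argument fails to be clean, the fallback is to handle Step 1 differently: use an exchange argument that moves mass between adjacent $t_i,t_{i+1}$ while preserving the constraints, and show that any tight ratio constraint can be relaxed so as to decrease the product.
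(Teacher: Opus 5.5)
Your Steps 1 and 3 are sound and match the paper. Step 1 is the same vertex reduction, using concavity and a nonnegative recession cone. In Step 3, your single-crossing proof that $c$ majorizes the geometric block is equivalent to the paper's ratio comparison.

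The gap is in Step 2. The vertex structure there depends on the hypothesis $c_{i+1}/c_i>1/\rho$, and you never use it. Both justifications you give fail:

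\begin{itemize}
\item \emph{The scaling argument for tightness of $T_d\ge B_d$.} Shrinking all $t_i$ by a common factor violates any prefix constraint $T_r\ge B_r$ with $r<d$ that is already tight. Without the hypothesis the claim is in fact false. For $d=2$, $\rho=3/4$, $B_1=1$, $B_2=1.1$, the minimizer is $(1,3/4)$, and $T_2$ is slack there.
\item \emph{Counting independent constraints.} This does not force every internal ratio constraint of a prefix block to be tight, because a tight boundary constraint $t_{r_j+1}=\rho t_{r_j}$ can take its place. Take $d=3$ with $c_2<\rho c_1$ (and suitable $c_3$). The constraints $t_1=B_1$, $t_2=\rho t_1$, $T_3=B_3$ are linearly independent and define a vertex. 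Its tight prefixes are $\{1,3\}$, while $t_3\ge\rho t_2$ is in general slack.
\end{itemize}

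Under the lemma's assumptions, the hypothesis rules out such a point, since $T_2\ge B_2$ would force $c_2\le\rho c_1$. But at a vertex of this kind the prefix block is not geometric, so your Step 3 cannot be applied to it.

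The missing idea, which the paper supplies, is to define the blocks the other way round. Take the maximal runs of tight ratio constraints; on such a run $t_i=\zeta\rho^{i-1}$. Then show that no prefix constraint is tight at an index $r$ strictly inside a run $\{a+1,\dots,b\}$:
\begin{itemize}
\item Feasibility at $a$ and tightness at $r$ bound $\zeta$ above by a weighted average of the ratios $c_i/\rho^{i-1}$, $a<i\le r$. These ratios are strictly increasing by the hypothesis, so $\zeta\le c_r/\rho^{r-1}$.
\item Tightness at $r$ and feasibility at $r+1$ give $\zeta\ge c_{r+1}/\rho^{r}>c_r/\rho^{r-1}$, a contradiction.
\end{itemize}
Now count. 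With $\beta$ runs, only $d-\beta$ ratio constraints are tight, so at least $\beta$ prefix constraints are tight, and all of them sit at run ends. Hence every run end, including $d$, is tight. Each run is then a geometric block whose sum equals the sum of the $c_i$ over the run, and your Step 3 finishes the proof.
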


\begin{proof}
We reduce to vertices, then compare products block by block.
Put $w_i=\rho^{i-1}$ and $z_i=t_i/w_i$.  The constraints become
$z_1\leq z_2\leq\cdots\leq z_d$ and $\sum_{i\leq r}w_iz_i\geq B_r$
($1\leq r\leq d$); they define a polyhedron $\Pi\subset\R^d$ contained in
the open positive orthant ($z_1\geq B_1>0$).  The objective
$f(\vct{z})=\sum_i\log(w_iz_i)$ is concave on $\Pi$ and nondecreasing in
every coordinate, and the recession cone of $\Pi$,
$\{0\leq z_1\leq\cdots\leq z_d\}$, lies in the nonnegative orthant.
$\Pi$ contains no line, so every point of $\Pi$ is a convex combination of
vertices plus a recession vector, and $f$ is at least its minimum over the
vertices.  It remains to show $f\geq\sum\log c_i$ at every vertex.

At a vertex, $d$ linearly independent constraints are active.  The active
monotonicity constraints $z_i=z_{i+1}$ partition $\{1,\ldots,d\}$ into
maximal blocks of constant $z$; if there are $\beta$ blocks, then
$d-\beta$ monotonicity constraints are active and at least $\beta$ prefix
constraints must be active.  No prefix constraint is active at an index
$r$ strictly inside a block $\{a+1,\ldots,b\}$ ($a<r<b$) on which $z$
equals $\zeta$.  Suppose otherwise.  Feasibility at $a$ and activity at
$r$ give
$\zeta\sum_{a<i\leq r}w_i\leq B_r-B_a=\sum_{a<i\leq r}c_i$.
Thus $\zeta$ is at most a weighted average of the $c_i/w_i$ over
$a<i\leq r$.  Since these ratios are strictly increasing,
$\zeta\leq c_r/w_r$.  On the other hand, activity at $r$ and feasibility
at $r+1$ give $w_{r+1}\zeta\geq c_{r+1}$, and hence
$\zeta\geq c_{r+1}/w_{r+1}>c_r/w_r$, a contradiction.  Thus the
active prefixes can occur only at the $\beta$ block ends.  Since at least
$\beta$ are needed, every block end is active.  On each block $\{a+1,\ldots,b\}$ the
$t_i=w_i\zeta$ form a geometric sequence with ratio $\rho$ whose sum is
$B_b-B_a=\sum_{a<i\leq b}c_i$.

\Needspace{5\baselineskip}
Compare, within one block of length $L$, the decreasing geometric
sequence $\vct{g}=(t_{a+1},\ldots,t_b)$ with the sequence
$\vct{c}^{\downarrow}=(c_b,c_{b-1},\ldots,c_{a+1})$, both positive with
the same sum.  Consecutive ratios satisfy
$c^{\downarrow}_{j+1}/c^{\downarrow}_j<\rho=g_{j+1}/g_j$, so for $j\leq
k<l$ we have $c^{\downarrow}_l/c^{\downarrow}_j\leq g_l/g_j$ and
$c^{\downarrow}_j/c^{\downarrow}_k\geq g_j/g_k$, whence
\[
  \frac{\sum_{l>k}c^{\downarrow}_l}{\sum_{j\leq k}c^{\downarrow}_j}
  \leq\frac{\sum_{l>k}g_l/g_k}{\sum_{j\leq k}g_j/g_k}
  =\frac{\sum_{l>k}g_l}{\sum_{j\leq k}g_j}
  \qquad(1\leq k<L),
\]
and with equal totals, $\sum_{j\leq k}c^{\downarrow}_j\geq\sum_{j\leq
k}g_j$ for all $k$: $\vct{c}^{\downarrow}$ majorizes $\vct{g}$.  Since
$\sum\log$ is Schur-concave, $\prod_{a<i\leq b}c_i\leq\prod_{a<i\leq
b}t_i$.  Multiplying over the blocks gives $\prod_it_i\geq\prod_ic_i$ at
every vertex.  Finally $t_i=c_i$ is feasible ($c_{i+1}>c_i/\rho>\rho
c_i$ and all prefixes are equalities), so the bound is attained.
\end{proof}

\subsection{The shape factor}

\begin{lemma}[Shape bound]\label{lem:shape}
On the positive simplex,
$R_{13}<101/200$ and $R_{14}<1/2$.
\end{lemma}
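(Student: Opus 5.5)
The plan is to remove the simplex normalisation first and then reduce to a low-dimensional optimisation that can be checked rigorously. Write $x_i=Mu_i$ with $M=\max_jx_j$, so $u\in(0,1]^n$ and some $u_i=1$. Since $q_i$ depends only on $u_i$, substituting into~\eqref{eq:F} makes the factors of $M$ cancel:
\[
  R_n(\vct{x})^2=\frac{n^2\bigl(\prod_iu_i\bigr)^{2/n}}{\prod_iq_i\cdot\sum_iu_i^2/q_i},
  \qquad q_i=q(u_i)=1+2u_i-u_i^2 .
\]
So it suffices to bound
\[
  G(u)=\tfrac{2}{n}\sum_i\log u_i-\sum_i\log q(u_i)-\log\sum_iu_i^2/q(u_i)
\]
on $(0,1]^n$, with one coordinate fixed at $1$ by the symmetry of $G$.

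For compactness, $G\to-\infty$ as any $u_i\to0$: the first term diverges while the others stay bounded, since $q_i\in(1,2]$ and $\sum u_i^2/q_i\geq\frac12$ from the coordinate equal to $1$. Hence the supremum is attained at some $u$ with all $u_i\in[\delta,1]$, for an explicit $\delta$. At this maximiser, every coordinate $u_i$ with $0<u_i<1$ satisfies the stationarity condition
\[
  \frac{2}{nu_i}-\frac{q'(u_i)}{q(u_i)}=\frac{1}{\Sigma}\,\frac{d}{du}\Bigl(\frac{u^2}{q(u)}\Bigr)\Big|_{u=u_i},
  \qquad \Sigma=\sum_ju_j^2/q_j .
\]
Clearing denominators gives a polynomial equation in $u_i$ of low degree whose coefficients involve only the common constant $\Sigma$. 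I would then show that, for every admissible $\Sigma$, this equation has at most two roots in $(0,1)$. The cleanest route is probably to prove that a suitable rearrangement $\psi(u)=\Sigma$ has $\psi$ unimodal on $(0,1)$. It follows that an optimal $u$ uses at most three distinct values: $1$ with multiplicity $a\geq1$, and $\alpha,\beta$ with multiplicities $b,c$, where $a+b+c=n$.

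This leaves finitely many patterns $(a,b,c)$ for $n=13,14$. For each pattern, $G$ is an explicit function of two variables on $[\delta,1]^2$. I would bound it by interval-arithmetic branch and bound, or by a Lipschitz grid with explicit derivative bounds. The goal is to certify $\max G<2\log(101/200)$ for $n=13$ and $\max G<2\log\frac12$ for $n=14$. I expect the maximiser to be near $b=n-1$ with $\alpha$ moderately large, and I would compute it numerically first to see the margin.

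The main obstacle is the root-count step. If the polynomial can have three roots for some $\Sigma$, the reduction only yields four values, and the computation becomes three-dimensional per pattern, which is still feasible but less clean. If the analytic argument fails altogether, the fallback is a direct certified branch and bound on the symmetric reduced problem, using that $G$ is Schur-type in the free coordinates. Either way, the final strict inequalities depend on a rigorous numerical certificate with a clearly stated margin.
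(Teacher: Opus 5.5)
Your normalisation and your first-order condition match the paper's, and the plan can be completed. As written, though, it has one definite error, and it leaves its key step unproved. After clearing denominators your stationarity condition reads
\[
  L(u):=\frac{q(u)\{1-(n-2)u+(n-1)u^2\}}{n\,u^2(1+u)}=\frac1\Sigma ,
\]
which is the paper's critical equation with $A=\Sigma$. The error is that your target misses the factor $n^2$. Since $R_n(\vct{x})^2=n^2e^{G(u)}$, you need $\max G<2\log(r_n/n)$, that is, $2\log(101/2600)$ for $n=13$ and $-2\log 28$ for $n=14$. Certifying $\max G<2\log\frac12$ would only give $R_{14}<7$. With the correct target the margin is thin for $n=13$. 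The paper's estimates give $e^{-\max G}>663.48$ against $(2600/101)^2\approx662.68$, and $>796.4$ against $784$ for $n=14$. The extremiser has $n-1$ equal small coordinates (about $0.085$ for $n=13$ and $0.078$ for $n=14$), not a moderately large one.

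The root count you flag as the main obstacle holds. In fact it gives uniqueness, so only one interior value occurs. Study $L$ itself, not $\psi=1/L$, which has poles at the roots $\alpha<\alpha'$ of $1-(n-2)u+(n-1)u^2$. The derivative of $L$ has the sign of $-P(u)$, where $P(u)=13u^5+26u^4-50u^3-20u^2-7u+2$ for $n=14$ and $P(u)=6u^5+12u^4-23u^3-9u^2-3u+1$ for $n=13$. By Descartes' rule of signs together with $P(0)>0>P(1)$, $P$ changes sign exactly once in $(0,1)$, so $L$ decreases and then increases. On the increasing branch $L<L(1)=2/n$. On the other hand $\Sigma<n/2$, because $w(u)=u^2/q(u)<\frac12$ for $u<1$. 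Hence $1/\Sigma>2/n$, and every root lies on the decreasing branch, which contains at most one. The paper argues differently. For fixed $C=\sum_{j\neq i}w(u_j)\le(n-1)/2$, the one-coordinate slice $a(u)(C+w(u))$ decreases and then increases, with its only critical point in $(0,\alpha)$. So every interior coordinate of a minimiser lies in $(0,\alpha)$, where $L$ is monotone.

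With a single interior value $t$, the paper finishes by hand rather than by interval arithmetic. If $k\geq2$ coordinates equal $1$, the bound $a(u)=q(u)u^{-2/n}>8/5$ gives $e^{-G}=H\geq\frac k2\,2^k(8/5)^{n-k}$, far above the target. If exactly one coordinate equals $1$, $H$ depends on $t$ alone, and its logarithmic derivative has the sign of an increasing cubic. One rational comparison at a bracket for the cubic's root then finishes. Your pattern-by-pattern two-variable branch and bound would also be sound once the target is corrected. It is heavier than needed, however: the explicit $\delta$ from your compactness estimate is of order $10^{-23}$ for $n=14$, so the search must run on a logarithmic scale. It also yields a machine certificate where the paper gives a short hand-checkable one.
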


\begin{proof}
We start with $n=14$.  At a minimizer, all coordinates below the maximum
have the same value, reducing the estimate to one variable.  We prove this
first, then give the changes for $n=13$.
Passing from $x_i$ to $u_i=x_i/\max_jx_j$ cancels the common scale
in \eqref{eq:F}: with
\[
  q(u)=1+2u-u^2,\qquad a(u)=\frac{q(u)}{u^{2/n}},\qquad
  w(u)=\frac{u^2}{q(u)},\qquad
  H(\vct{u})=\Bigl(\sum_iw(u_i)\Bigr)\prod_ia(u_i),
\]
one has $R_n(\vct{x})^2=n^2/H(\vct{u})$, so it suffices to prove
$H>4n^2=784$ on the domain $\vct{u}\in(0,1]^n$ with some $u_i=1$.  Since
$w(1)=\frac12$, $w\geq0$, and $a(u)\to\infty$ as $u\to0$, $H$ tends to
infinity when any coordinate tends to $0$, so $H$ attains its minimum.

\emph{Interior coordinates.}  Fix all coordinates but one interior
coordinate $u\in(0,1)$ and put $C=\sum_{j\neq i}w(u_j)\in(0,\frac{n-1}2]$.
Up to a positive factor the objective is $\phi(u)=a(u)(C+w(u))$, and a
direct computation gives
\begin{equation}\label{eq:phi}
  \tfrac n2\,u^{1+2/n}\,\phi'(u)
  =-\bigl[C\{(n-1)u^2-(n-2)u+1\}-(n-1)u^2\bigr]=:-\beta(u).
\end{equation}
Let $\alpha$ be the smaller root of $(n-1)u^2-(n-2)u+1$ ($\alpha_{14}
=0.0926\ldots$).  Then $\beta(0)=C>0$ and $\beta(\alpha)=-(n-1)\alpha^2<0$.
If $C\leq1$, $\beta$ is concave with $\beta(0)>0$ and has exactly one
positive root; if $C>1$, $\beta$ is convex with $\beta(1)=2C-(n-1)\leq0$,
so its second root is $\geq1$.  In both cases $\beta$ has exactly one
root $u_0\in(0,\alpha)$ in $(0,1)$, and $\phi$ decreases on $(0,u_0)$ and
increases on $(u_0,1)$.  Hence at a minimizer every interior coordinate is
a critical point in $(0,\alpha)$.  At such a point, with
$A=C+w(u)=\sum_iw(u_i)$, the equation $\beta(u)=0$ is equivalent to
\begin{equation}\label{eq:crit}
  \frac{q(u)\{1-(n-2)u+(n-1)u^2\}}{n\,u^2(1+u)}=\frac1A .
\end{equation}
The derivative of the left side has the sign of
$-(13u^5+26u^4-50u^3-20u^2-7u+2)$, and the polynomial is positive on
$(0,\alpha)$ (drop its positive terms and use $\alpha<0.1$), so the left
side is strictly decreasing there.  Since $A$ is the same for every
coordinate, all interior coordinates of a minimizer share one value $t$.

\emph{Two or more coordinates equal to one.}  We use $a(u)>\frac85$ on
$(0,1]$: the logarithmic derivative of $a$ vanishes exactly at the roots
$(6\pm\sqrt{23})/13$ of $13u^2-12u+1$, so $a$ decreases from $+\infty$
to a local minimum at $u_1=(6-\sqrt{23})/13\in(0.092,0.093)$, increases
to a local maximum, and decreases to $a(1)=2$; hence
$a\geq\min(a(u_1),2)$ and $a(u_1)>q(0.092)/0.093^{1/7}>1.65$.  If $k\geq2$
coordinates equal one, then $\sum_iw(u_i)\geq k/2$ and
$H\geq\frac k2\,2^k(\tfrac85)^{n-k}$, which increases with $k$ and at
$k=2$ equals $4\cdot(8/5)^{12}>1125>784$.

\emph{Exactly one coordinate equal to one.}  Then the other thirteen equal
$t\in(0,\alpha)$ and
\[
  H(t)=\Bigl(\tfrac12+\tfrac{13t^2}{q(t)}\Bigr)\cdot2\cdot\frac{q(t)^{13}}{t^{26/14}}
  =\frac{(1+2t+25t^2)\,q(t)^{12}}{t^{13/7}} .
\]
On this interval its logarithmic derivative has the same sign as
$325t^3+23t^2+9t-1$, since all remaining factors are positive.  Thus $H$
decreases until the unique root
$t_0\in(0.0782,0.0783)$ of the increasing cubic and increases afterwards.
As $q$ and $1+2t+25t^2$ increase and $t^{-13/7}$ decreases,
\[
  H(t_0)\geq\frac{(1+2\cdot0.0782+25\cdot0.0782^2)\,q(0.0782)^{12}}{0.0783^{13/7}}
  >796.4>784 ,
\]
an exact rational comparison after raising to the seventh power.  So
$H>784$ in every case, and $R_{14}<14/28=\frac12$.

For $n=13$, the smaller root in the interior-coordinate argument is
$\alpha_{13}=(11-\sqrt{73})/24<0.103$.  In~\eqref{eq:crit} the derivative
has the sign of $-2P(u)$, where
$P(u)=6u^5+12u^4-23u^3-9u^2-3u+1$.  On $(0,\alpha_{13})$,
$P(u)>1-3(0.103)-9(0.103)^2-23(0.103)^3>0$, so again all interior
coordinates share one value.  With two or more coordinates equal to one,
the corresponding one-variable check gives
$a(u)>8/5$, and the preceding product estimate gives
$H\geq4(8/5)^{11}>(2600/101)^2$.  With exactly one such coordinate, the
other twelve share the unique value $t_0\in(0.0849,0.0850)$, since the
logarithmic derivative of $H$ has the sign of the increasing cubic
$276t^3+21t^2+8t-1$, and
\[
 H(t_0)=\frac{(1+2t_0+23t_0^2)q(t_0)^{11}}{t_0^{24/13}}
 >663.48>\left(\frac{2600}{101}\right)^2.
\]
The last inequality follows by using $0.0849$ in the increasing factors
and $0.0850$ in the denominator, and may be checked after raising to the
thirteenth power.  This is exactly $R_{13}<101/200$.
\end{proof}

\subsection{The theorem and its constants}

\begin{theorem}[Flag bound]\label{thm:flag}
Assume $LRC(m)$ for all $m<n$, let $\vct{v}\in\Z^n_{>0}$ be a primitive
counterexample to $LRC(n)$, and suppose $c_{i+1}/c_i>4/3$ for
$1\leq i<n-1$.  Put
\[
  K_n=\prod_{i=1}^{n-1}c_i,\qquad A_n=K_n^{-1/2}.
\]
Then $S\,F_n(\vct{x})\leq A_n$.  If moreover $R_n<r_n$ on the
simplex, then
\[
  v_1\cdots v_n<\Bigl(\frac{A_n r_n}{n}\Bigr)^{n}.
\]
For $n=13$, Lemma~\ref{lem:shape} gives
\[
  A_{13}=6362078580113.54\ldots,
  \qquad
  \log(v_1\cdots v_{13})<341.031991.
\]
For $n=14$, the corresponding constants are
\[
  K_{14}=\frac{2289670297}{97243124257250844122250000000000000000},\qquad
  A_{14}=206083383792625.44\ldots,
\]
\[
  \log(v_1\cdots v_{14})<14\log\frac{A_{14}}{28}
  =414.77936455669981\ldots<\RawThreshUB .
\]
\end{theorem}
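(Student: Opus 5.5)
The plan is to chain the earlier results of this section together. Take a Korkine--Zolotarev basis $\vct{b}_1,\ldots,\vct{b}_d$ of $\Lambda$ with respect to $\langle\cdot,\cdot\rangle_E$, as provided by Lemma~\ref{lem:kz}. Its squared Gram--Schmidt lengths $t_i$ satisfy $t_{i+1}\geq\frac34t_i$. Since $\vct{v}$ is a counterexample and $LRC(m)$ holds for $m<n$, Proposition~\ref{prop:prefix} applies to this basis and gives $t_1+\cdots+t_r>B_r\geq B_r$ for every $r\leq d$. The hypothesis $c_{i+1}/c_i>4/3=1/\rho$ is exactly what Lemma~\ref{lem:minimize} requires, so
\[
  \covol_E(\Lambda)^2=\prod_it_i\geq\prod_ic_i=K_n .
\]
Lemma~\ref{lem:det} identifies the left side with $1/(S^2F_n(\vct{x})^2)$. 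Taking square roots gives $SF_n(\vct{x})\leq K_n^{-1/2}=A_n$.

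Next I convert this into a bound on the product. From the definition \eqref{eq:F} we have $(\prod_iv_i)^{1/n}=S(\prod_ix_i)^{1/n}=SF_n(\vct{x})R_n(\vct{x})/n$. Using $R_n<r_n$ on the simplex, this gives $(v_1\cdots v_n)^{1/n}<A_nr_n/n$, and raising to the $n$-th power yields the displayed product bound. The inequality is strict because $R_n<r_n$ is strict, and $SF_n>0$.

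What remains is arithmetic for $n=13,14$. First I compute $B_r=(2r/((n+1)(n+1-r)))^2$ exactly as rationals, then $c_i=B_i-B_{i-1}$. I verify $c_{i+1}/c_i>4/3$ for $1\leq i<n-1$ by exact rational comparison. A priori this is plausible: $B_r$ grows like $(r/(n+1-r))^2$, which is convex and increases sharply near $r=d$. The check nearest to failing is probably the smallest $i$, since $c_2/c_1=B_2/B_1-1$; for $n=14$ this equals $4\cdot(14/13)^2-1\approx3.64$, which is comfortably above $4/3$. Next I form $K_n$ as an exact rational, which for $n=14$ should reproduce the stated fraction, and then $A_n=K_n^{-1/2}$. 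Finally I insert $r_{13}=101/200$ and $r_{14}=1/2$ from Lemma~\ref{lem:shape}, giving $14\log(A_{14}/28)$ and $13\log(101A_{13}/2600)$.

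The main obstacle is not conceptual but certifying the numerical constants rigorously. I would do this with exact rational arithmetic for $K_n$. For the logarithm, I would bound it by comparing $K_n$ against $e^{-2\cdot(\text{threshold})/n}$ using rational enclosures of $e$, or by interval arithmetic. This confirms $\log(v_1\cdots v_{14})<414.7794$ and $\log(v_1\cdots v_{13})<341.031991$.
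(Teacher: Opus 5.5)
Your proposal is correct and is essentially the paper's proof: a KZ basis in the $E$-norm, then Proposition~\ref{prop:prefix} with Lemma~\ref{lem:minimize} to get $\covol_E(\Lambda)^2\geq K_n$, Lemma~\ref{lem:det} to get $SF_n(\vct{x})\leq A_n$, and the definition of $R_n$ for the product bound. There are two harmless slips, both caught by the exact checks you plan: the ratio condition is tightest at $c_6/c_5$ ($1175/688$ for $n=13$, $847/513$ for $n=14$), not at $i=1$; and the threshold comparison for a bound $T$ should be $K_n>(r_n/n)^2e^{-2T/n}$, not against $e^{-2T/n}$ alone.
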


\begin{proof}
Take a KZ basis (Lemma~\ref{lem:kz}).  Its Gram--Schmidt lengths satisfy
the hypotheses of Lemma~\ref{lem:minimize} by Proposition~\ref{prop:prefix},
so $\covol_E(\Lambda)^2=\prod_it_i\geq K_n$, and Lemma~\ref{lem:det} gives
$(SF_n(\vct{x}))^{-2}\geq K_n$, i.e.\ $SF_n(\vct{x})\leq A_n$.  By the
definition \eqref{eq:F} of $R_n$,
\[
  n\,(v_1\cdots v_n)^{1/n}=n\,S\,(\textstyle\prod_ix_i)^{1/n}
  =S\,F_n(\vct{x})\,R_n(\vct{x})<r_nA_n ,
\]
which is the product bound.  The ratio hypothesis has minimum
$1175/688$ for $n=13$ and $847/513$ for $n=14$; Lemma~\ref{lem:shape}
supplies $r_{13}=101/200$ and $r_{14}=1/2$.  Substitution gives the
displayed constants.
\end{proof}

\begin{remark}[Comparison with the earlier bound]\label{rem:compare}
The bound of~\cite{MSS25}, in the form $\sum_iv_i<\binom{n+1}{2}^{n-1}$
used in~\cite{ST26}, gives
$\log(v_1\cdots v_{14})<14(13\log105-\log14)=810.07\ldots$.
The recent reformulation in~\cite{BCS26} gives the same bound; the bound
of~\cite{GK26} is weaker.

The prefix inequalities already give a substantial improvement.
Let $\lambda_1\leq\cdots\leq\lambda_d$ be the successive minima of
$\Lambda$ with respect to $D$.  Rounding the coefficients of independent
vectors attaining these minima, as in the proof of
Proposition~\ref{prop:prefix}, gives
$\lambda_1+\cdots+\lambda_r>2r/((n+1)(n+1-r))$ for every $r$.
The projected cube $D$ has volume
$2^{n-1}S/\lVert\vct{v}\rVert_2$.  Together with Minkowski's second
theorem, these inequalities yield $\sum_iv_i<(n+1)!\,n!/2^n$,
giving a logarithmic product bound of $570.46$ when $n=14$
(Table~\ref{tab:thresholds}).

Two further ingredients bring this down to $414.78$.  With
Gram--Schmidt lengths we have the exact identity
$\prod_it_i=\covol_E(\Lambda)^2$.  The KZ basis adds the ratio constraints
needed to minimize this product in Lemma~\ref{lem:minimize}, so we no
longer need an estimate from Minkowski's second theorem.
The adapted ellipsoid $E$ lets us pass directly from the covolume
estimate to the speed product, using the shape factor of
Lemma~\ref{lem:shape}, without first converting a Euclidean bound into
one on the sum of the speeds.  For $n=13$ the theorem gives
$341.03\ldots$ against $670.35\ldots$.  We are not aware of an earlier
improvement of this bound.
\end{remark}

\subsection{The forced divisor}

We also use the following divisibility observation of
Rosenfeld~\cite[Lemma~4]{Rosenfeld25}.

\begin{lemma}[Forced divisor]\label{lem:divisor}
If $\vct{v}\in\Z_{>0}^{n}$ is a counterexample to $LRC(n)$, then the
integer $\lcm(2,3,\ldots,n+1)$ divides $v_1\cdots v_n$.  For $n=14$,
\[
  \lcm(2,\ldots,15)=360360=2^3\cdot3^2\cdot5\cdot7\cdot11\cdot13,\qquad
  \log360360=12.79485\ldots>\DivisorLogLB .
\]
\end{lemma}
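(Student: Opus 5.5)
Since the lemma concerns prime powers, we show that for each prime power $p^e\le n+1$ the product $v_1\cdots v_n$ is divisible by $p^e$. Because $\lcm(2,\ldots,n+1)$ is the product over primes $p\le n+1$ of the largest such $p^e$, this suffices.

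Fix such $p^e$ and suppose, for contradiction, that $p^e\nmid v_1\cdots v_n$. Our first choice of time is $t=a/(n+1)$ with $a\in\{1,\ldots,n\}$. For that $t$ we have $\nearest{tv_i}\ge 1/(n+1)$ exactly when $(n+1)\nmid av_i$. So the tuple is a counterexample only if, for every $a$, some $v_i$ satisfies $av_i\equiv0\pmod{n+1}$, that is, $\frac{n+1}{\gcd(a,n+1)}\mid v_i$. This grid is not the right one for a general prime power. Instead we use the grid $t=a/p^e$, or more flexibly $t=a/m$ for a chosen $m$, and count times that each speed rules out.

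The cleanest version is an averaging argument. Choose $t$ uniformly from $\{a/N: a\in\Z_N\}$ with $N$ suitable. Speed $v_i$ is bad at $t$ when $av_i/N$ lies within $<1/(n+1)$ of an integer. If $g_i=\gcd(v_i,N)$, the residues $av_i \bmod N$ run uniformly over multiples of $g_i$. The bad fraction is then the proportion of multiples of $g_i$ in $(-N/(n+1),N/(n+1))$ modulo $N$, namely $\frac{g_i}{N}\bigl(2\lceil N/((n+1)g_i)\rceil-1\bigr)$.

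Take $N=n+1$. Speed $i$ is bad for $a$ exactly when $(n+1)/g_i\mid a$. The bad sets are subgroups of $\Z_{n+1}$, and they must cover every nonzero $a$. In particular $a=(n+1)/p^e$ generates the subgroup of order $p^e$. That $a$ is killed by speed $i$ only if $(n+1)/g_i$ divides $(n+1)/p^e$, i.e.\ $p^e\mid g_i\mid v_i$. This gives the claim whenever $p^e\mid n+1$.

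For prime powers $p^e\le n+1$ not dividing $n+1$, the grid $t=a/(n+1)$ does not work directly. Here we use the grid $t=a/p^e$ together with intermediate times. The main obstacle is this case, where we must exhibit a witness when no $v_i$ is divisible by $p^e$. The plan is to take $t=j/p^e + \delta$ with small $\delta$, or $t=a/(p^e+1)$-type times. If every $v_i$ has $p$-adic valuation below $e$, then $p^{e}/\gcd(p^e,v_i)\ge p$ and $tv_i$ lies on a grid of spacing at most $1/p\ge 1/(n+1)$, and we argue there is a grid point avoiding all arcs.

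If this direct route fails, the fallback is Rosenfeld's original argument~\cite[Lemma~4]{Rosenfeld25}. There one sets $m=p^e$ and considers $t=a/m$ for $a$ coprime to $p$. For $v_i$ with $v_p(v_i)<e$ the point $av_i/m$ is never an integer. One then counts over $a\in\{1,\ldots,m-1\}$ how often $\nearest{av_i/m}<1/(n+1)$. Since $m\le n+1$, this happens only when $av_i\equiv0\pmod m$, which is impossible, because nonzero residues mod $m$ are at distance $\ge1/m\ge1/(n+1)$. So any $a$ with $p\nmid a$ is a witness, which is a contradiction.

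This last observation is in fact the whole proof. For $m=p^e\le n+1$ and $a=1$, $\nearest{v_i/m}\ge 1/m\ge1/(n+1)$ unless $m\mid v_i$. Hence if no $v_i$ is divisible by $p^e$, then $t=1/p^e$ is a witness. So the plan collapses to this one-line argument applied to each maximal prime power $p^e\le n+1$, followed by taking the lcm. Finally, we evaluate $360360$ and its logarithm for $n=14$.
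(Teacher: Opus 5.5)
Your final one-line argument is correct and is essentially the paper's proof. If no $v_i$ is divisible by $d$, then $t=1/d$ is a witness, because each $\nearest{v_i/d}$ is a nonzero multiple of $1/d\geq1/(n+1)$; the paper takes any $2\le d\le n+1$, you take $d=p^e$ directly, and in both the pairwise coprime maximal prime powers multiply to $\lcm(2,\ldots,n+1)$. The earlier detours (averaging, the grid $a/(n+1)$, Rosenfeld-style counting) are unnecessary and should be cut, and the $n=14$ value $2^3\cdot3^2\cdot5\cdot7\cdot11\cdot13=360360$, $\log360360=12.79485\ldots$, is routine arithmetic.
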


\begin{proof}
Fix $2\leq d\leq n+1$.  If no $v_i$ is divisible by $d$, then at
$t=1/d$ every $\nearest{v_i/d}$ is a nonzero multiple of $1/d$, hence
$\geq1/d\geq1/(n+1)$, and $t$ is a witness.  So every $d\leq n+1$
divides some $v_i$.  For each prime $r\leq n+1$, let $q_r$ be the largest
power of $r$ not exceeding $n+1$.  Then $q_r$ divides the speed product.
The numbers $q_r$ are pairwise coprime, and their product is
$\lcm(2,\ldots,n+1)$.
\end{proof}

\begin{corollary}[Sufficient logarithmic prime sum]\label{cor:mass}
Let $\mathcal P$ be a set of primes $p>13$ such that, for every $p\in
\mathcal P$, every primitive counterexample
$\vct{v}\in\Z_{>0}^{14}$ to $LRC(14)$ with distinct coordinates has
$p\mid v_1\cdots v_{14}$.  If
$\sum_{p\in\mathcal P}\log p>\TargetUB$, then $LRC(14)$ holds.
\end{corollary}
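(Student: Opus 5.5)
We argue by contradiction.  We assume $LRC(14)$ fails and reduce to a primitive counterexample with distinct coordinates.  Then the divisibilities by the primes in $\mathcal P$ and by $360360$ are combined and compared with the flag bound of Theorem~\ref{thm:flag}.  We take $LRC(m)$ for $m<14$ as known (from the cited work for $m\leq12$, and for $m=13$ from the earlier part of this argument), as the statement implicitly requires for Theorem~\ref{thm:flag} to apply.

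\emph{Reduction.}  Let $\vct{v}$ be a counterexample to $LRC(14)$.  Changing signs, we may assume all speeds are positive.  If two speeds coincide, the tuple has at most $13$ distinct values $m$, and $LRC(m)$ gives a time with every distance at least $1/(m+1)\geq1/15$, a contradiction.  So the coordinates are pairwise distinct.  Dividing by $g=\gcd(v_1,\ldots,v_{14})$ preserves the counterexample property, since the time $t/g$ transports witnesses.  Hence we may take $\vct{v}$ primitive.

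\emph{Divisibility.}  By hypothesis each $p\in\mathcal P$ divides $P:=v_1\cdots v_{14}$.  By Lemma~\ref{lem:divisor}, $360360$ divides $P$.  The primes in $\mathcal P$ all exceed $13$, while $360360$ is composed of primes at most $13$.  So $360360\prod_{p\in\mathcal P}p$ divides $P$, and since $P>0$ this gives
\[
  \log P\geq\log360360+\sum_{p\in\mathcal P}\log p>\DivisorLogLB+\TargetUB=\RawThreshUB .
\]

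\emph{Contradiction.}  Theorem~\ref{thm:flag} with $n=14$ gives $\log P<414.77936455\ldots<\RawThreshUB$, which contradicts the previous display.  The only delicate step is checking the numerical margins.  The constants must satisfy $\RawThreshUB-\DivisorLogLB=\TargetUB$ exactly, with $\DivisorLogLB$ a lower bound for $\log360360=12.79485\ldots$ and $\RawThreshUB$ an upper bound for the flag constant, so the strict inequalities chain correctly.  Everything else is bookkeeping.
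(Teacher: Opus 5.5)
Your proof is correct and follows the paper's argument essentially step for step. You reduce to a positive, primitive counterexample with pairwise distinct coordinates using $LRC(m)$ for $m\leq13$. You then note that $360360$ and $\prod_{p\in\mathcal P}p$ are coprime divisors of the speed product, and contradict the $n=14$ case of Theorem~\ref{thm:flag} through $\DivisorLogLB+\TargetUB=\RawThreshUB>14\log(A_{14}/28)$.
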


\begin{proof}
We use $LRC(13)$, proved first in Section~\ref{sec:gateset}.
Let $\vct{v}$ be a counterexample.  Sign changes and rescaling preserve the
lonely-runner property, so we may take $\vct{v}$ positive and primitive.
If two coordinates coincide, at most thirteen distinct speeds occur, and
$LRC(13)$ gives a witness with
$\nearest{tv_i}\geq\frac1{14}>\frac1{15}$.  The coordinates must therefore
be pairwise distinct.  Then $360360\prod_{p\in\mathcal
P}p$ divides $v_1\cdots v_{14}$, the factors being pairwise coprime, so
\[
  \log(v_1\cdots v_{14})\geq\log360360+\sum_{p\in\mathcal P}\log p
  >\DivisorLogLB+\TargetUB=\RawThreshUB>14\log\frac{A_{14}}{28},
\]
contradicting Theorem~\ref{thm:flag}.
\end{proof}

\section{The modular computations}\label{sec:pipeline}

Fix $k\in\{13,14\}$ and a prime $p$.  After folding signs, both speed
classes and nonzero time classes are represented by
$1,\ldots,(p-1)/2$.  Put
\[
 d_p(x)=\min(r,p-r),\qquad r\equiv x\pmod p,\quad 0\leq r<p.
\]
A speed class $v$ covers a time class $a$ when
\begin{equation}\label{eq:cover}
  (k+1)d_p(av)<p.
\end{equation}
Note that this strict inequality is precisely the failure of the witness
inequality for the speed $v$ at time $a/p$.
At level one the gcd alternative in the definition of properness is
unavailable.  Thus $I(k,p,1)$ is exactly the family of $k$-multisets whose
speed classes cover every folded time class.

Let $\tau_k(p)$ be the smallest number of speed classes that cover all
folded time classes under~\eqref{eq:cover}.  When $k$ is fixed, we write
$\tau(p)$.

\subsection{A complete two-branch covering search}\label{sec:covers}

Every such multiset is of one of two kinds.  It is an irredundant cover on
$k$ distinct classes, or it contains a cover on at most $k-1$ classes.
We search these two cases separately.  At each node we choose an uncovered
time that can be covered by the fewest available speed classes.  Every
completion must use one of those classes, so we branch on these choices.
We discard a branch if the remaining classes, even at best, cannot cover
all uncovered times.  The next lemma gives a further pruning rule.

\begin{lemma}[Private-time bound]\label{lem:quota}
Let $n=(p-1)/2$ and $m=\lfloor(p-1)/(k+1)\rfloor$.  In a cover of the $n$
folded times by at most $s$ distinct speed classes, some class covers at
least
\[
 q_s=\left\lceil\frac{\max(0,2n-sm)}{s}\right\rceil
\]
times privately, meaning that no other class in the cover covers them.  If
the cover is the support of a $k$-multiset on at most $k-1$ classes, such a
class can be chosen with multiplicity at most two and with at least
$q_{k-1}$ private times.
\end{lemma}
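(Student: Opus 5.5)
The plan is a double-counting argument based on the fact that every speed class covers exactly the same number of folded times. I would first show that each class $v$ covers exactly $m$ folded times. Since $p$ is prime and $p\nmid v$, the map $a\mapsto av$ permutes the nonzero residues. It also commutes with $a\mapsto -a$, so it induces a bijection of the folded classes $\{1,\ldots,n\}$ with $a\mapsto d_p(av)$. Hence the number of folded $a$ covered by $v$ equals the number of $j\in\{1,\ldots,n\}$ with $(k+1)j<p$. Because $p>k+1$ is prime, $p/(k+1)$ is not an integer, so $(k+1)j<p$ is equivalent to $j\leq\lfloor(p-1)/(k+1)\rfloor=m$. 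Also $m\leq n$, so the count is exactly $m$.

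Next comes the counting for the first claim. Take a cover by $s'\leq s$ distinct classes and let $P$ be the number of privately covered times. Counting incidences (time, covering class) gives at most $s'm$ in total. Each private time contributes $1$ and each other time contributes at least $2$, so
\[
  s'm\;\geq\;P+2(n-P)=2n-P,
\]
that is, $P\geq 2n-s'm\geq 2n-sm$. Averaging $P$ over the $s'\leq s$ classes gives a class with at least $\lceil\max(0,2n-sm)/s\rceil=q_s$ private times. The $\max(0,\cdot)$ covers the trivial case.

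For the second claim, let the support have $s'\leq k-1$ classes with multiplicities summing to $k$. Call a class \emph{heavy} if its multiplicity is at least $3$, and let $h$ be the number of heavy classes. Then $k\geq 3h+(s'-h)$, so $h\leq (k-s')/2$, and therefore $s'+h\leq (k+s')/2\leq k-\tfrac12$, i.e.\ $s'+h\leq k-1$. Each heavy class has at most $m$ private times. Hence the $s'-h$ light classes (multiplicity at most two) together have at least
\[
  2n-s'm-hm\;\geq\;2n-(k-1)m
\]
private times. If $q_{k-1}>0$, this quantity is positive, so there is at least one light class. Averaging over the $s'-h\leq k-1$ light classes then gives one with at least $\lceil(2n-(k-1)m)/(k-1)\rceil=q_{k-1}$ private times.

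The step I expect to be the main obstacle is this multiplicity refinement. The inequality $s'+h\leq k-1$ has to absorb the heavy classes' possible private times exactly. In the degenerate case $q_{k-1}=0$, the existence of a light class is not forced by the count alone: if every class were heavy, we would have $s'\leq k/3$. I would handle this case separately, either by noting that the pruning rule only uses the lemma when $q_{k-1}>0$, or by checking directly that $s'\leq k/3$ classes covering $m$ times each cannot cover all $n$ folded times for the primes in question. The second option requires $(k/3)m<n$, and the first should suffice for the application.
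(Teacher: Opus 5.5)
Your argument is the same as the paper's proof. The first claim uses the incidence count $s'm\geq n_1+2(n-n_1)$. For the refinement, the bound $s'+h\leq k-1$ (which the paper derives from $s'+2h\leq k$) absorbs the heavy classes' at most $hm$ private times into $2n-(k-1)m$. Your explicit check that each class covers exactly $m$ folded times is correct, and the paper leaves it implicit.

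The degenerate case you flag never occurs, so you should not fall back on restricting the lemma to the cases the pruning uses. Since $2n=p-1$ and $m\leq(p-1)/(k+1)$,
\[
 2n-(k-1)m\;\geq\;(p-1)-\frac{(k-1)(p-1)}{k+1}\;=\;\frac{2(p-1)}{k+1}\;>\;0
\]
for every prime $p$. Hence $q_{k-1}\geq1$, and the light classes always carry a positive number of private times, so a light class exists. This is the unexplained ``$>0$'' in the paper's proof. Your second option also works unconditionally, for every prime and not just those used, since $(k/3)m\leq k(p-1)/(3(k+1))<(p-1)/2=n$.
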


\begin{proof}
Suppose that $s'\leq s$ classes form the cover, and let $n_j$ be the number
of times covered exactly $j$ times.  Counting incidences between speed
classes and time classes gives
\[
 s'm=\sum_jj n_j\geq n_1+2(n-n_1),
\]
so $n_1\geq2n-s'm\geq2n-sm$.  These $n_1$ times are private, and one
class has at least $q_s$ private times.

For the refinement, let $h$ be the number of support classes of
multiplicity at least three.  Since the multiset has $k$ entries,
$s'+2h\leq k$, hence $s'+h\leq k-1$.  The classes of multiplicity at least
three account for at most $hm$ private times.  The other classes have at
least
\[
 2n-s'm-hm\geq2n-(k-1)m>0
\]
of them.  There are at most $k-1$ such classes, so one has multiplicity at
most two and at least $q_{k-1}$ private times.
\end{proof}

The irredundant branch enumerates $k$-class covers that satisfy the $q_k$
bound.  In the smaller-cover branch, take a $k$-multiset $M$ supported on at most $k-1$
classes and choose the class $c$ given by Lemma~\ref{lem:quota}.  If $c$
occurs twice, remove one copy; if it occurs once, remove a copy of some
repeated class.  The resulting $(k-1)$-multiset has the same support and
contains $c$ exactly once.  A unit moves $c$ to class $1$.  The search
enumerates the resulting multiset satisfying the $q_{k-1}$ bound and every
one-class extension, one of which recovers $M$.  The same argument applies when $M$
has $k$ distinct classes but one is removable: remove that class first.

\begin{proposition}[Completeness of level-one generation]
\label{prop:basecomplete}
For $k=13$ and $k=14$, the corresponding procedure produces at least one
representative of every unit orbit in $I(k,p,1)$.
\end{proposition}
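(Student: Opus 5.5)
We fix $k\in\{13,14\}$ and a $k$-multiset $M\in I(k,p,1)$, that is, a multiset of folded speed classes covering every folded time class. We show that some unit multiple of $M$ (after folding) is output by one of the two branches. The proof has three parts: a dichotomy on the shape of $M$, correctness of the pruning rules, and exhaustiveness of the branching.

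\emph{Dichotomy.} Let $T$ be the support of $M$. Either $|T|=k$ and $T$ is an irredundant cover, or $T$ contains a cover on at most $k-1$ classes. The second case covers both $|T|\le k-1$ and $|T|=k$ with a removable class.

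\emph{Irredundant case.} Multiplication by a unit permutes folded time classes and preserves the covering relation~\eqref{eq:cover}, since $d_p(a\cdot uv)=d_p((au)v)$. So unit orbits of covers correspond to unit orbits of multisets. By Lemma~\ref{lem:quota} with $s=k$, some class $c\in T$ has at least $q_k$ private times. We use a unit to normalize $c$ to class $1$; this is the orbit representative we need. The search then branches at each node on an uncovered time $a$ with fewest available covering classes. Any completion to $T$ must contain a class covering $a$, so some branch follows $T$. The feasibility prune ("remaining classes cannot cover the uncovered times") discards only nodes with no completion, so it never discards $T$. The quota prune discards only partial covers that can no longer give class $1$ at least $q_k$ private times. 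For $T$ it never fires, because adding further classes can only shrink private sets, and the check is stated as a necessary condition on the final cover. By induction on depth, the branch through $T$ survives, so $T$, and hence $M$, is produced.

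\emph{Smaller-cover case.} Suppose first $|T|\le k-1$. The refinement in Lemma~\ref{lem:quota} gives a class $c$ of multiplicity at most two in $M$ with at least $q_{k-1}$ private times in $T$. We remove one copy as described to obtain a $(k-1)$-multiset $M'$ with support $T$ containing $c$ exactly once, and normalize $c$ to $1$ by a unit. Then $M'$ is a $(k-1)$-multiset cover satisfying the $q_{k-1}$ bound. By the same depth induction, with the same two pruning arguments applied to the support, the search reaches it, and some one-class extension of $M'$ equals $M$.

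If instead $|T|=k$ but a class $r$ is removable, we delete $r$. This gives a cover $T\setminus\{r\}$ on $k-1$ distinct classes. Lemma~\ref{lem:quota} with $s=k-1$ yields a class with $q_{k-1}$ private times. We normalize it to $1$ and enumerate this cover, and its extension by $r$ recovers $M$.

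\emph{Main obstacle.} The delicate point is the interaction between the quota prune and the branching order. The prune uses private times in the \emph{final} cover, but it is applied to partial covers, where "private" is not monotone in the obvious direction. I would formulate the prune as an upper bound: the number of times that class $1$ can still cover privately is at most the number of times it covers that are not yet covered by any other chosen class. This count only decreases as classes are added, so failing the bound at a node implies failure for every completion. A second point needs care: the normalizing unit must be chosen before the search starts. Since the search fixes class $1$ first, this is consistent.

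With these two points settled, the argument is uniform in $k$. The hypotheses $k=13,14$ enter only through $2n-(k-1)m>0$, which holds because $m\le (p-1)/(k+1)$ gives $(k-1)m<2n$.
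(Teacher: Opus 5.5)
Your proposal is correct and follows the same route as the paper: the two-branch dichotomy, unit normalization of the class supplied by Lemma~\ref{lem:quota}, completeness of branching on an uncovered time, and soundness of the gain and private-time prunes (your monotonicity formulation of the private-time prune is more explicit than the paper's). The paper's proof also notes that the lexicographic-minimum deduplication in the irredundant branch never removes a whole orbit, which you omit. In the smaller-cover branch you should also say that a $(k-1)$-multiset with redundant support or repeated classes is reached by filling the remaining slots once all times are covered: branching on uncovered times alone stops at the first subcover and never adds repeats, so the ``same depth induction'' does not by itself produce it.
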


\begin{proof}
Equation~\eqref{eq:cover} identifies the base family with the covers above.
The two branches exhaust those covers, and Lemma~\ref{lem:quota} proves that
each orbit has a normalization satisfying the private-time bound.
Branching on an uncovered time includes every completion; the gain bound
discards only branches that cannot complete a cover.  Taking a
lexicographic minimum in the irredundant branch removes duplicates without
removing an orbit; the smaller-cover branch may retain more than one
representative.
\end{proof}

Before running the second branch, a separate exhaustive search can certify
a lower bound for $\tau(p)$.  This sometimes shortens the calculation.  For
$k=13$, if $\tau(p)\geq12$, it is enough to enumerate twelve-class covers;
otherwise we enumerate all covers on at most twelve classes.  For $k=14$,
if $\tau(p)\geq13$, we extend only minimal thirteen-class covers; if
$\tau(p)\leq12$, we use the general enumeration above.  These shortcuts
change only the amount of work, not the family covered by
Proposition~\ref{prop:basecomplete}.

\subsection{Successive binary lifting}

For each retained level-one representative $r$, we use the binary lifts
of Section~\ref{sec:framework} to compute $F_l(r)$ at levels
$2,4,8,16,$ and $32$, stopping if the set becomes empty.

For each of the $111$ primes in the
\href{https://doi.org/10.5281/zenodo.22066772}{fourteen-runner archive},
every level-one orbit except two has an empty improper fiber by level
$32$.  The two persistent orbits are represented by
\begin{equation}\label{eq:orbits13}
 \vct{a}_{13}=(1,2,\ldots,13),\qquad
 \vct{b}_{13}=(1,2,\ldots,11,13,24).
\end{equation}
We call a $k$-tuple \emph{tight} if
$\max_t\min_i\nearest{tv_i}=1/(k+1)$.  Both tuples in~\eqref{eq:orbits13}
are tight; the second is the Goddyn--Wong example~\cite{GW06}.
For the verified $k=14$ primes with $\tau(p)\geq13$, the only orbit surviving the binary lifts
is represented by $\vct{a}_{14}=(1,2,\ldots,14)$.  When
$\tau(p)\leq12$, more orbits survive and are handled by the level-$15$
argument below.

\subsection{Fourteen runners: the \texorpdfstring{level-$14$}{level-14} fibers}

Each improper level-$2$ member of the two orbits in
\eqref{eq:orbits13} has $7^{13}$ lifts to level $14$.  We assign the lift
digits one at a time.  At a partial assignment, let $U$ be the uncovered
time classes, and for each unassigned coordinate $i$ let $b_i$ be the
largest number of classes in $U$ covered by any of its seven choices.  If
$|U|>\sum_i b_i$, no completion can cover all remaining time classes, so we
discard the branch.  Every other branch is continued to a leaf and checked
there.

\begin{lemma}[Completeness of the level-$14$ search]\label{lem:terminal14}
If every witness-free tuple completed by the preceding search satisfies
the gcd condition, then the fiber contains no improper tuple.
\end{lemma}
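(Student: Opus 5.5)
The lemma is a soundness statement about the pruning rule: every discarded branch contains no improper tuple, and every surviving leaf is either witnessed or satisfies the gcd condition. An improper tuple at level $14$ is a lift $\vct{w}$ (of an improper level-$2$ member) that has no witness on the grid $(14p)^{-1}\Z$ and fails the gcd alternative~(a). So it suffices to show that every witness-free lift in the fiber is reached as a leaf of the search. The hypothesis then says each such leaf satisfies~(a), hence is proper, and the fiber $F_{14}$ contains no improper tuple.

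The one nontrivial step is that a branch discarded by the test $|U|>\sum_i b_i$ contains no witness-free completion. First I fix the meaning of ``covered'' at level $14$. For a grid time $t=a/(14p)$ and speed $w_i$, the speed covers $a$ when $(k+1)\,d_{14p}(aw_i)<14p$, with $d_{14p}$ defined as $d_p$ but modulo $14p$. This condition depends only on $w_i \bmod 14p$, that is, on the lift digit of coordinate $i$. By definition a lift is witness-free exactly when every time class (after folding $a\mapsto -a$) is covered by some coordinate.

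Now take a partial assignment with uncovered set $U$, computed from the assigned coordinates. In any completion, the classes of $U$ must be covered by the unassigned coordinates alone. Unassigned coordinate $i$, whatever digit it receives, covers at most $b_i$ classes of $U$, since $b_i$ is the maximum over its seven choices. By the union bound, at most $\sum_i b_i$ classes of $U$ get covered. If $|U|>\sum_ib_i$, every completion leaves some class of $U$ uncovered, that class is a witness, and no completion is witness-free. Induction on the number of assigned digits then shows every witness-free lift survives to a leaf, because all seven choices are branched at each undiscarded node.

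Finally I would record two bookkeeping points. The search is run over all improper level-$2$ members of each orbit in~\eqref{eq:orbits13}, and by monotonicity (Section~\ref{sec:framework}) every improper level-$14$ tuple reduces to an improper level-$2$ tuple. Also, $7^{13}$ is exactly the set of lifts from level $2$ to level $14$ with $c=7$. With both in place, $F_{14}(r)$ is empty for each representative $r$. The main obstacle is not mathematical depth but making sure the covering notion used in $U$ and $b_i$ is exactly the level-$14$ grid condition, including the zero time and the folding of times. If the implementation folds times, $b_i$ must be counted on folded classes, consistently with $U$; otherwise the union bound could be applied to mismatched sets.
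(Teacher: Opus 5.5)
Your proposal is correct and follows the paper's argument: the union bound (each unassigned coordinate covers at most $b_i$ classes of $U$) shows that every discarded branch contains only tuples with a grid witness, and the surviving leaves are checked directly against both alternatives of properness. Your extra remarks on reducing to improper level-$2$ members and on folding time classes consistently in $U$ and $b_i$ go beyond what the lemma asserts, but they are harmless.
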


\begin{proof}
The unassigned coordinates cover at most $\sum_i b_i$ classes of $U$ in
any completion.  A discarded branch therefore leaves an uncovered time and
every completion in it has a witness.  All remaining leaves are tested
directly against both alternatives in the definition of properness.
\end{proof}

At every prime in this archived $111$-prime set, every
completed tuple without a witness had all thirteen coordinates divisible by
$7$.  Omitting any coordinate leaves a common divisor with the level, so each
such tuple is proper by the gcd condition.  Lemma~\ref{lem:terminal14}
shows that every level-$14$ lift of either persistent orbit is proper.

\begin{proposition}\label{prop:prime13}
For every prime $p$ in the archived $111$-prime set,
$J(13,p)=\varnothing$.  In particular, this holds for
every $p\in\mathcal P_{13}$ defined in Section~\ref{sec:gateset}.
\end{proposition}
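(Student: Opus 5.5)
The plan is to assemble the computations just described into the level-by-level invariant of Section~\ref{sec:framework}, and then use the equivalence between $J(k,p)=\varnothing$ and the vanishing of $I(k,p,l)$ at a single level (Lemma~2.4 of~\cite{ST26}).

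Fix $p$ in the archived set. First, Proposition~\ref{prop:basecomplete} says the level-one search outputs at least one representative of every unit orbit in $I(13,p,1)$. Eventual properness is invariant under units, permutations and sign changes (Proposition~5.1 of~\cite{ST26} and the unit-orbit remark). So it suffices to show that each retained representative $r$ is eventually proper. Vectors of $\Z_{p,1}^{13}$ outside $I(13,p,1)$ are already proper at level one, hence eventually proper.

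Second, for each retained $r$ I would use the induction of Section~\ref{sec:framework}. Enumerating all binary lifts of $F_l(r)$ and discarding exactly the proper ones computes $F_{2l}(r)$ exactly. For every $r$ outside the orbits of $\vct{a}_{13}$ and $\vct{b}_{13}$, the archived run reaches $F_l(r)=\varnothing$ at some $l\le 32$, and then $r$ is eventually proper. For the two persistent orbits, every improper vector at level $14$ reduces modulo $2p$ to an improper vector at level $2$, that is, to a member of $F_2(r)$, because properness persists at multiples of the level. So $F_{14}(r)$ is contained in the union of the $7^{13}$-element lift fibers of the members of $F_2(r)$.

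Third, the branch-and-bound search runs over exactly these fibers. By Lemma~\ref{lem:terminal14} together with the reported outcome, every witness-free leaf has all thirteen coordinates divisible by $7$. Hence $\gcd(14,\ldots)\ge 7$ after omitting any coordinate, so alternative (a) holds. Therefore $F_{14}(r)=\varnothing$ for both persistent orbits.

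It then remains to pass from ``every $r$ eventually proper'' to $J(13,p)=\varnothing$, which is immediate from the definitions. If a common level is wanted, take $l=\lcm(32,14)$ and use monotonicity in the level. The inclusion $\mathcal P_{13}\subseteq$ archived set is by definition in Section~\ref{sec:gateset}.

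The main obstacle is not mathematical but the reliance on the correctness and completeness of the computations. These are the exactness of the retained sets $F_l(r)$, the pruning in the level-one search, and the pruning bound $|U|>\sum_i b_i$. The first two are covered by Proposition~\ref{prop:basecomplete} and the framework induction. I would check in particular that the level-$14$ search starts from all of $F_2(r)$ rather than from a pruned subset, and that the certificates record this for each prime.
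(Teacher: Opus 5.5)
Your proposal is correct and follows the paper's own proof. You use Proposition~\ref{prop:basecomplete} for level-one completeness, then binary lifting shows every orbit except those of $\vct{a}_{13}$ and $\vct{b}_{13}$ has an empty fiber by level $32$; for those two, the level-$14$ search over the $7^{13}$-lifts of $F_2(r)$ finds that every witness-free leaf has all coordinates divisible by $7$, so the gcd alternative holds and Lemma~\ref{lem:terminal14} gives properness of every lift. Your additions only spell out what the paper's short proof leaves implicit: the containment of $F_{14}(r)$ in the lifts of $F_2(r)$ by monotonicity, and the invariance under units, permutations and signs.
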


\begin{proof}
By Proposition~\ref{prop:basecomplete}, the initial search includes a
representative of every level-one orbit.  The binary lifts show eventual
properness for all but the two orbits in~\eqref{eq:orbits13}.  For these two,
the level-$14$ computation and Lemma~\ref{lem:terminal14} show that every
lift is proper.  Thus every orbit is eventually proper, and
$J(13,p)=\varnothing$.
\end{proof}

\subsection{Fifteen runners: factoring \texorpdfstring{level $15$}{level 15}}

A direct search of all $15^{14}$ lifts in a final fiber would be too large.
Instead we use $15=3\cdot5$.

\begin{lemma}[Witnesses lift along divisors]\label{lem:factor}
Let $f\mid15$, let $\vct{w}$ be a lift of $r$ to level $15$, and let
$\vct{w}'$ be its reduction to level $f$.  A witness for $\vct{w}'$ in
$(fp)^{-1}\Z$ is also a witness for $\vct{w}$.  Consequently every
witness-free level-$15$ lift reduces to witness-free lifts at levels $3$
and $5$, which determine it through the Chinese remainder theorem.
\end{lemma}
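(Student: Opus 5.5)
The plan is to reduce both assertions to a statement about time grids. A witness for $\vct{w}'$ at level $f$ is a time $t\in(fp)^{-1}\Z$ with $\nearest{tw'_i}\geq1/(k+1)$ for all $i$. Since $f\mid15$, we have $(fp)^{-1}\Z\subseteq(15p)^{-1}\Z$, so the same $t$ lies on the level-$15$ grid. It remains to check that the values $tw_i$ and $tw'_i$ agree modulo one.

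Write $t=a/(fp)$ with $a\in\Z$. Because $\vct{w}'$ is the reduction of $\vct{w}$ to level $f$, we have $w_i\equiv w'_i\pmod{fp}$, say $w_i=w'_i+m_ifp$ with $m_i\in\Z$. Then $tw_i=tw'_i+am_i$, so $\nearest{tw_i}=\nearest{tw'_i}\geq1/(k+1)$ for every $i$, and $t$ is a witness for $\vct{w}$. This proves the first assertion.

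For the second assertion, take the contrapositive. If $\vct{w}$ is witness-free at level $15$, then neither reduction, at level $3$ or at level $5$, can have a witness on its own grid. Each reduction $\vct{w}'$ is a lift of $r$ at its level: it is congruent to $\vct{w}$, hence to $r$, modulo $p$, and its coordinates lie in $\Z_{fp}$ and remain prime to $p$.

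Finally, since $\gcd(3p,5p)=p$ and $\lcm(3p,5p)=15p$, the Chinese remainder theorem (applied to $\Z_{3p}\times_{\Z_p}\Z_{5p}$) shows that a residue modulo $15p$ is determined by its residues modulo $3p$ and $5p$. Thus each coordinate $w_i\in\Z_{15p}$ is recovered from the two reductions. The only point needing care is the correct grid inclusion, $(fp)^{-1}\Z\subseteq(15p)^{-1}\Z$, together with the congruence $w_i\equiv w'_i$ modulo $fp$ rather than modulo $f$. I do not expect a real obstacle here.
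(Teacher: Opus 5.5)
Your proposal is correct and follows essentially the paper's argument: the congruence $w_i\equiv w_i'\pmod{fp}$ makes $t(w_i-w_i')$ an integer for $t\in(fp)^{-1}\Z$, so each witness for the reduction is a witness for $\vct{w}$, and the Chinese remainder theorem recovers the level-$15$ lift from its two reductions. The only cosmetic differences are that you state the grid inclusion $(fp)^{-1}\Z\subseteq(15p)^{-1}\Z$ explicitly, and you phrase the CRT step through residues modulo $3p$ and $5p$ rather than, as the paper does, through the lift digits modulo $3$ and $5$.
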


\begin{proof}
If $\vct{w}\equiv\vct{w}'\pmod{fp}$ and $t\in(fp)^{-1}\Z$, then
$t(w_i-w_i')\in\Z$, so
$\nearest{tw_i}=\nearest{tw_i'}$.  The lift digits modulo $3$ and modulo
$5$ determine their residues modulo $15$.
\end{proof}

We search the $3^{14}$ lifts at level $3$ and the $5^{14}$ lifts at level
$5$ separately, then combine compatible candidates using the Chinese
remainder theorem.  We test each resulting tuple on the full level-$15$
time grid and check the gcd condition.  For $\vct{a}_{14}$, seven candidates
have no grid witness, and all seven satisfy the gcd condition.

When the covering number is smaller, some lifts fail the gcd test despite
having many coordinates divisible by $3$ or $5$.  Shifts by $1/d$ leave the
speeds divisible by $d$ fixed modulo one, allowing us to handle the few
exceptions.  The next lemma adapts
Rosenfeld's shifting argument~\cite[Lemma~5]{Rosenfeld26} to fourteen speeds.

\begin{lemma}[Few-exception shift]\label{lem:neargcd}
Let $u_1,\ldots,u_{14}$ be positive integers, let $d$ be prime, let
$e=\#\{i:d\nmid u_i\}$, and put $m_d=\lceil2d/15\rceil$.  Assume
$LRC(m)$ for $m\leq12$.  If $2\leq e$ and $e m_d<d$, then the tuple has
the LR property.  In particular, it applies when exactly twelve
coordinates are divisible by $3$, or when ten, eleven, or twelve
coordinates are divisible by $5$.
\end{lemma}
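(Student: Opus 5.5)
The idea is to freeze the speeds divisible by $d$ with a lower case of the conjecture, and then walk along the shifts $t_0+q/d$ to move the few exceptional speeds out of their bad arcs.

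\emph{Freezing the divisible speeds.} Split the indices into $A=\{i:d\mid u_i\}$, with $|A|=14-e$, and the exceptional set $X=\{i:d\nmid u_i\}$, with $|X|=e$. Since $e\geq2$, the speeds in $A$ take $m\leq12$ distinct values. If $A$ is nonempty, $LRC(m)$ gives $t_0$ with
\[
\nearest{t_0u_i}\geq\frac1{m+1}\geq\frac1{13}>\frac1{15}\qquad(i\in A).
\]
If $A$ is empty, take $t_0=0$. Now consider the $d$ times $t_q=t_0+q/d$ for $q=0,\ldots,d-1$. For $i\in A$ we have $t_qu_i\equiv t_0u_i\pmod 1$, because $d\mid u_i$. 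So all of these coordinates stay good at every shift.

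\emph{Moving the exceptional speeds.} Fix $i\in X$. Since $d$ is prime and $d\nmid u_i$, the map $q\mapsto qu_i\bmod d$ is a bijection of $\Z_d$. Hence the points $t_qu_i \bmod 1$ run exactly once through the coset $t_0u_i+d^{-1}\Z$ of equally spaced points. The bad set for speed $u_i$ is the open arc $\{x:\nearest{x}<1/15\}$ of length $2/15$. An open arc of length $2/15$ contains at most $\lceil 2d/15\rceil=m_d$ points of a grid with spacing $1/d$. So each exceptional speed rules out at most $m_d$ values of $q$, and together they rule out at most $em_d<d$ values. Some $q$ therefore survives, and at $t_q$ every coordinate satisfies $\nearest{t_qu_i}\geq1/15$. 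This is the LR property~\eqref{eq:lrc} for $k=14$.

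\emph{The listed cases.} For $d=3$ we have $m_3=\lceil6/15\rceil=1$; exactly twelve coordinates divisible by $3$ means $e=2$, and $2\cdot1<3$. For $d=5$ we have $m_5=\lceil10/15\rceil=1$; ten, eleven or twelve divisible coordinates give $e\in\{4,3,2\}$, and each satisfies $e<5$.

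The only step needing care is the counting bound for an open arc. Closed-versus-open endpoint issues could in principle add a point, but openness keeps the count at most $\lceil Ld\rceil$ for length $L=2/15$. The remaining care is using distinctness of values to invoke $LRC(m)$ with $m\leq12$, exactly as in the proof of Lemma~\ref{lem:gate}.
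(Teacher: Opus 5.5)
Your proposal is correct and is essentially the paper's proof. It freezes the speeds divisible by $d$ using $LRC$ on at most twelve values, notes that the shifts $t_0+q/d$ move each exceptional speed through a complete $d$-point grid whose open bad arc of length $2/15$ holds at most $m_d$ points, and concludes that $em_d<d$ leaves a good shift; you also supply the justifications the paper only states (that the grid is complete because $d$ is prime, and the $\lceil 2d/15\rceil$ count for an open arc).
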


\begin{proof}
The speeds divisible by $d$ take at most $14-e\leq12$ distinct values, so
$LRC(14-e)$ gives a time $t_0$ at which they are all at distance at least
$1/(15-e)>1/15$ from the integers.  The shifts
$t_0+q/d$, $0\leq q<d$, leave these coordinates fixed modulo one.  For an
exceptional speed, the shifts form a complete $d$-point grid, and the bad
open arc of length $2/15$ contains at most $m_d$ of its points.  All $e$
exceptions forbid fewer than $d$ shifts, so one shift is good for every
coordinate.  For $d=3$ we have $m_3=1$ and $e=2$; for $d=5$ we have
$m_5=1$ and $2\leq e\leq4$.
\end{proof}

For either $d\in\{3,5\}$, the cases $e\leq1$ are already proper by the
gcd condition.  The level-$15$ program applies Lemma~\ref{lem:neargcd} to
every witness-free lift left improper by that test.  Divisibility by $3$
and $5$ is constant on its residue class modulo $15p$, so the third
alternative of Lemma~\ref{lem:gate}(ii) holds for that entire class.

\begin{proposition}\label{prop:prime14}
For every $p\in\mathcal P_{14}$ defined below, every primitive
counterexample to $LRC(14)$ with distinct coordinates has
$p\mid v_1\cdots v_{14}$.  For the primes in the second block of
Table~\ref{tab:gates14}, moreover, $J(14,p)=\varnothing$.
\end{proposition}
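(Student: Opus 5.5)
The proof applies Lemma~\ref{lem:gate} prime by prime, with the computations of this section certifying its hypotheses. Fix $p\in\mathcal P_{14}$. First, Proposition~\ref{prop:basecomplete} (with $k=14$) guarantees that the level-one search yields a representative of every unit orbit in $I(14,p,1)$. By Remark~2.3 and Proposition~5.1 of~\cite{ST26}, it therefore suffices to treat one representative $r$ per orbit. Vectors outside $I(14,p,1)$ are proper at level one, hence eventually proper. For each retained $r$, the binary lifting computes $F_l(r)$ exactly at levels $2,4,\ldots,32$, by the induction in Section~\ref{sec:framework}. If some $F_l(r)$ is empty, then $r$ is eventually proper.

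For the primes of the second block (those with $\tau(p)\geq13$), the only orbit surviving binary lifting is that of $\vct{a}_{14}$. There I will use the level-$15$ factorisation. By Lemma~\ref{lem:factor}, every witness-free level-$15$ lift of an improper lower-level member reduces to witness-free lifts at levels $3$ and $5$, so CRT-combining the two separate searches captures all of them. The certificate then records that the seven surviving candidates satisfy the gcd condition. Hence $F_{15}(\vct{a}_{14})=\varnothing$, which gives eventual properness. One small check is needed here: the level-$15$ fiber is reached from level $1$ directly rather than from level $32$, but properness persists under multiples, so working at level $15$ over the level-one class is legitimate. Thus every orbit is eventually proper, $J(14,p)=\varnothing$, and Lemma~\ref{lem:gate}(i) gives $p\mid v_1\cdots v_{14}$.

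For the remaining primes ($\tau(p)\leq12$), more orbits persist past binary lifting. For each persistent orbit I run the same level-$15$ CRT search. Every witness-free lift that fails the gcd test is checked to have $d\in\{3,5\}$ dividing all but $e$ coordinates, with $e$ in the range of Lemma~\ref{lem:neargcd}: $e=2$ for $d=3$, $2\leq e\leq4$ for $d=5$. That lemma requires $LRC(m)$ for $m\leq12$, which is available. Since divisibility of each coordinate by $3$ or $5$ depends only on the residue modulo $15p$, every integer vector in that class with distinct positive coordinates has the LR property. So the third alternative of Lemma~\ref{lem:gate}(ii) holds at level $l=15$, and Lemma~\ref{lem:gate}(ii) yields $p\mid v_1\cdots v_{14}$. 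Lemma~\ref{lem:gate} assumes $LRC(m)$ for $m<14$, which is supplied by the $k=13$ result of Section~\ref{sec:gateset}, proved first.

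The main obstacle is the completeness of the level-$15$ step, not the logic. One must ensure that the level-$3$ and level-$5$ searches enumerate \emph{all} witness-free lifts, including lifts whose level-$3$ or level-$5$ reduction is proper only through the gcd alternative. Such a reduction must not be discarded, because gcd-properness at level $3$ transfers upward but a witness-free reduction must be kept. I would therefore retain at levels $3$ and $5$ every lift lacking a grid witness, regardless of gcd status, and apply the gcd test and Lemma~\ref{lem:neargcd} only to the combined level-$15$ tuples. The rest is verification of the archived certificates.
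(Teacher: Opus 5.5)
Your proposal is correct and follows the paper's proof. The steps match: level-one completeness, binary lifting, and then the level-$15$ CRT search justified by Lemma~\ref{lem:factor}. For the second block the gcd test alone gives $J(14,p)=\varnothing$ and Lemma~\ref{lem:gate}(i); for the first block Lemma~\ref{lem:neargcd} supplies the third alternative of Lemma~\ref{lem:gate}(ii). Your closing worry is harmless but unnecessary. A level-$3$ or level-$5$ reduction that is gcd-proper forces every level-$15$ lift above it to be gcd-proper, so discarding such reductions would lose nothing.
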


\begin{proof}
By Proposition~\ref{prop:basecomplete}, the initial search includes a
representative of every level-one orbit.  We examine every orbit left after
binary lifting at level $15$, missing no witness-free lift by
Lemma~\ref{lem:factor}.  Every such lift is either
proper by the gcd condition or satisfies Lemma~\ref{lem:neargcd} throughout
its integer residue class.  Lemma~\ref{lem:gate}(ii) gives the required
divisibility.  For the primes in the second block of
Table~\ref{tab:gates14}, the gcd test alone suffices, so every orbit is
eventually proper and $J(14,p)=\varnothing$.
\end{proof}

\section{Verified primes and proof of the main theorem}\label{sec:gateset}

The fourteen-runner computation was completed first, with the bound
of~\cite{MSS25} and $111$ primes; the first version of this paper,
arXiv:2609.02604v1, presented that proof alone.  The flag bound of
Section~\ref{sec:flag} was then developed for fifteen runners.  With this
bound, only $61$ of the $111$ verified primes are needed for thirteen
speeds, so no further prime computation is required.
We keep both proofs of $LRC(13)$: the first depends only on
published bounds, the second uses Theorem~\ref{thm:flag}.

\begin{proof}[First proof for fourteen runners]
Proposition~\ref{prop:prime13} gives $J(13,p)=\varnothing$ for $111$
primes whose logarithms sum to
$681.52920\ldots$.  Assuming $LRC(12)$, the finite-checking theorem of
Malikiosis, Santos, and Schymura~\cite{MSS25} gives
$\log(v_1\cdots v_{13})<670.34974\ldots$ for a primitive counterexample.

Any counterexample may be taken positive, primitive, and pairwise distinct:
a repeated coordinate leaves at most twelve distinct speeds, so $LRC(12)$
supplies a witness.  Hence
Lemma~\ref{lem:gate}(i) forces all $111$ primes to divide the speed product,
which is impossible.  Thus this route proves $LRC(13)$ without using the
lattice flag bound.
\end{proof}

For the second proof, let
\begin{align*}
\mathcal P_{13}={}&\{83,139,167,181,191\}
 \cup\{p:199\leq p\leq479,\ p\text{ prime}\}\\
 &\cup\{487,491,499,503,509,521,523,541,547\}.
\end{align*}
This set contains $\GateCountThirteen{}$ primes, whose logarithmic sum
exceeds the $n=13$ bound in Theorem~\ref{thm:flag}:
\begin{equation}\label{eq:mass13}
 \sum_{p\in\mathcal P_{13}}\log p
 =353.772559\ldots>\GateMassThirteen>341.031991,
\end{equation}

\begin{proof}[Second proof for fourteen runners]
The lower cases are given by~\cite{ST26}.  As above, sign changes,
rescaling, and $LRC(12)$ let us take a counterexample positive, primitive,
and pairwise distinct.  By Proposition~\ref{prop:prime13} and
Lemma~\ref{lem:gate}(i), every prime in $\mathcal P_{13}$ divides its speed
product.  Equation~\eqref{eq:mass13} contradicts Theorem~\ref{thm:flag},
so $LRC(13)$ holds.
\end{proof}

For fourteen speeds, Table~\ref{tab:gates14} lists the set
$\mathcal P_{14}$ of $\GateCountFourteen{}$ primes used in the proof.

\begin{table}[ht]
\centering
\caption{The primes used for $LRC(14)$.  Logarithms are natural and block
sums are rounded down.  The first block uses Lemma~\ref{lem:neargcd}; the
second verifies $J(14,p)=\varnothing$.}\label{tab:gates14}
\small
\begin{tabular}{@{}>{\raggedright\arraybackslash}p{0.21\textwidth}
                    >{\raggedright\arraybackslash}p{0.50\textwidth}
                    >{\raggedleft\arraybackslash}p{0.07\textwidth}
                    >{\raggedleft\arraybackslash}p{0.11\textwidth}@{}}
\toprule
Block & Primes & Count & $\sum\log p$\\
\midrule
$\tau(p)\leq12$ &
89, 131, 149, 157, 163, 167, 173, 179, 181, 191, 193, 197, 199,
211, 223, 227, 229, 233, 241 & 19 & 98.8199\\[2pt]
$\tau(p)\geq13$ &
Every prime from 239 through 569 except 241 & 52 & 310.0033\\
\midrule
\textbf{Total} & & \textbf{71} & \textbf{408.8233}\\
\bottomrule
\end{tabular}
\end{table}

The exact logarithmic sum satisfies
\begin{equation}\label{eq:mass14}
 \sum_{p\in\mathcal P_{14}}\log p
 >\GateMassFourteen>\TargetUB.
\end{equation}

\begin{proof}[Proof of Theorem~\ref{thm:main} for fifteen runners]
The first part of the proof gives $LRC(13)$.  Proposition
\ref{prop:prime14} supplies the prime divisibilities required in
Corollary~\ref{cor:mass}, and \eqref{eq:mass14} supplies their logarithmic
sum.  The corollary therefore excludes every counterexample to $LRC(14)$.
\end{proof}

\section{Verification and reproducibility}\label{sec:verification}

Each prime certificate records the covering search and node counts,
retained tuples at each binary level, and the final calculation.
Separate scripts check its arithmetic, file hashes, prime lists, and
logarithmic sums, and recompute the flag constants.  Logarithmic sums use
fifty-digit precision, with bounds rounded in the required direction.

At small primes, the two-branch search reproduces direct enumeration.
For $k=13$, our orbit counts agree with~\cite{STcode} at $p=43,83,$ and
$199$, after accounting for different normalizations.  Using or ignoring
the covering-number bound gives the same tuples at $p=223$; narrow and
wide integer encodings give the same decoded tuples on small test cases.
For $k=14$, general and specialized searches agree on fixed test cases.

At each binary lift, an independent recount confirmed the number of
retained tuples.  Small-case tests checked binary lifting and the
Chinese-remainder step.  Every completed $k=13$ branch is tested directly.
Rerunning both level-$14$ searches at $p=877$ reproduced their respective
logs byte for byte.  We also reproduced selected complete prime
computations for both values of $k$.

\begin{samepage}
We also checked the $LRC(12)$ input to both proofs.  In
the \texttt{for-k-12} snapshot of~\cite{STcode}, all $1{,}092$ residual
tuples across the $91$ primes used in~\cite{ST26} are equivalent to
$(1,2,\ldots,12)$ under permutations, sign changes, and multiplication by a
unit.  Proposition~4.4 of~\cite{ST26} eliminates them all.  The $LRC(9)$
input is available as
\texttt{results/result\_10} in the main snapshot of the same repository.
\par
\end{samepage}

The full computation has not been independently
reimplemented or formally verified.  The proof uses the completeness
statements above and the archived source and certificates.  The technical
records give file formats, per-prime counts, benchmarks, resource use,
unsuccessful runs, and program configurations.

\section{Concluding remarks}

Tightness forces the two orbits in~\eqref{eq:orbits13} to survive the binary
lifts; the same holds for $(1,\ldots,14)$ when $k=14$.  What the computation
adds is the converse: no other orbit survives for any of the $111$
archived $k=13$ primes, or for any of the verified $k=14$ primes with
$\tau(p)\geq13$.  This agrees with the finite-level picture suggested by
Proposition~7.1 of~\cite{ST26}; it is not an assertion about all primes.

\paragraph*{Acknowledgements.}
I thank Touch Sungkawichai and Tanupat Trakulthongchai for helpful comments
on earlier drafts and suggestions that improved the exposition of this
paper.  I also thank Tanupat Trakulthongchai for suggesting that the
fourteen- and fifteen-runner results be presented together.

\Needspace{9\baselineskip}
\section*{Data availability and use of artificial intelligence}

The code, certificates, and checker scripts are archived at
\url{https://doi.org/10.5281/zenodo.22066772} for fourteen runners and
\url{https://doi.org/10.5281/zenodo.22667683} for fifteen.
The author used large
language models to assist with code development, drafting, exploring proof
ideas, and searching for errors in the manuscript.  No model output is
treated as mathematical evidence.  The author checked the arguments, code,
and data and takes full responsibility.

\begin{samepage}

\end{samepage}

\end{document}